\def\l{\left}
\def\r{\right}
\def\bg{\bigg}
\def\({\bg(}
\def\){\bg)}
\def\t{\text}
\def\f{\frac}
\def\ord{{\rm ord}}
\def\eq{\equiv}
\def\Z{\mathbb Z}
\def\N{\mathbb N}
\def\p{\mathfrak p}
\def\<{\langle}
\def\>{\rangle}
\def\1{{\bf 1}}
\theoremstyle{plain}
\newtheorem{theorem}{Theorem}
\newtheorem{lemma}{Lemma}
\theoremstyle{definition}
\newtheorem*{Acks}{Acknowledgments}
\theoremstyle{remark}
\numberwithin{equation}{section}
\begin{document}
\title[Some supercongruences concerning truncated hypergeometric series]{Proof of some supercongruences concerning truncated hypergeometric series}
\author[Chen Wang]{Chen Wang}
\address[Chen Wang]{Department of Mathematics, Nanjing
University, Nanjing 210093, People's Republic of China}
\email{cwang@smail.nju.edu.cn}

\author[Dian-Wang Hu]{Dian-Wang Hu*}
\address[Dian-Wang Hu]{School of Mathematics and Statistics, Nanyang Normal University, Nanyang 473061, Henan, People's Republic of China}
\email{hudiwa@163.com}

\begin{abstract}
In this paper, we prove some supercongruences concerning truncated hypergeometric series. For example, we show that for any prime $p>3$ and positive integer $r$,
$$
\sum_{k=0}^{p^r-1}(3k+1)\frac{(\frac12)_k^3}{(1)_k^3}4^k\equiv p^r+\frac76p^{r+3}B_{p-3}\pmod{p^{r+4}}
$$
and
$$
\sum_{k=0}^{(p^r-1)/2}(4k+1)\frac{(\frac12)_k^4}{(1)_k^4}\equiv p^r+\frac76p^{r+3}B_{p-3}\pmod{p^{r+4}},
$$
where $(x)_k=x(x+1)\cdots(x+k-1)$ is the Pochhammer symbol and $B_0,B_1,B_2,\ldots$ are Bernoulli numbers. These two congruences confirm conjectures of Sun [Sci. China Math. 54 (2011), 2509--2535] and Guo [Adv. Appl. Math. 120 (2020), Art. 102078], respectively.
\end{abstract}

\keywords{Truncated hypergeometric series, binomial coefficients, supercongruences, WZ pairs}
\subjclass[2020]{Primary 33C20, 11A07; Secondary 11B65, 05A10}
\thanks{*Corresponding author}

\maketitle

\section{Introduction}
\setcounter{lemma}{0} \setcounter{theorem}{0}
\setcounter{equation}{0}\setcounter{proposition}{0}
\setcounter{Rem}{0}\setcounter{conjecture}{0}

For $m,n\in\N=\{0,1,2\ldots\}$, the truncated hypergeometric series ${}_{m+1}F_m$ is defined by
$$
{}_{m+1}F_m\bigg[\begin{matrix}x_0&x_1&\ldots&x_m\\ &y_1&\ldots&y_m\end{matrix}\bigg|z\bigg]_n=\sum_{k=0}^n\f{(x_0)_k(x_1)_k\cdots(x_m)_k}{(y_1)_k\cdots(y_m)_k}\cdot\f{z^k}{k!},
$$
where $(x)_k=x(x+1)\cdots(x+k-1)$ is the Pochhammer symbol. During the past few decades, supercongruences concerning truncated hypergeometric series have been widely studied (cf. \cite{GZ,Long2011,MaoZhang2019,Sun2011,Sun2014,vH1997,Wang2020,WangSD2018,Zudilin2009}).

In 2011, Sun \cite{Sun2011} proposed some conjectural supercongruences which relate truncated hypergeometric series to Euler numbers and Bernoulli numbers (see \cite{Sun2011} for the definitions of Euler numbers and Bernoulli numbers). For example, he conjectured that for any prime $p>3$ we have
\begin{equation}\label{sunconj1}
\sum_{k=0}^{(p-1)/2}(3k+1)\f{(\f12)_k^3}{(1)_k^3}4^k\eq p+2(-1)^{(p-1)/2}p^3E_{p-3}\pmod{p^4},
\end{equation}
and for any $r\in\Z^{+}$ we have
\begin{equation}\label{sunconj2}
\sum_{k=0}^{p^r-1}(3k+1)\frac{(\frac12)_k^3}{(1)_k^3}4^k\equiv p^r+\frac76p^{r+3}B_{p-3}\pmod{p^{r+4}},
\end{equation}
where $E_0,E_1,E_2,\ldots$ are Euler numbers and $B_0,B_1,B_2,\ldots$ are Bernoulli numbers. Note that $ak+1=(1+1/a)_k/(1/a)_k$. Thus the sums in \eqref{sunconj1} and \eqref{sunconj2} are actually the truncated hypergeometric series. In 2012, using the WZ method (cf. \cite{PWZ}), Guillera and Zudilin \cite{GZ} proved that
\begin{equation}\label{GZres}
\sum_{k=0}^{p-1}(3k+1)\f{(\f12)_k^3}{(1)_k^3}4^k\eq\sum_{k=0}^{(p-1)/2}(3k+1)\f{(\f12)_k^3}{(1)_k^3}4^k\eq p\pmod{p^3},
\end{equation}
which is \eqref{sunconj2} modulo $p^3$ with $r=1$. In 2019, Mao and Zhang \cite{MaoZhang2019} confirmed \eqref{sunconj1} via a WZ pair found by Guillera and Zudilin \cite{GZ}. The reader is referred to \cite{Sun2019} for further conjectures involving the sums in \eqref{sunconj1} and \eqref{sunconj2}.

Our first theorem confirms \eqref{sunconj2}.
\begin{theorem}\label{mainth1}
For any prime $p>3$ and integer $r\geq1$, we have
\begin{equation}\label{mainth1eq1}
\sum_{k=0}^{p^r-1}(3k+1)\f{(\f12)_k^3}{(1)_k^3}4^k\eq p^r+\f76p^{r+3}B_{p-3}\pmod{p^{r+4}}.
\end{equation}
\end{theorem}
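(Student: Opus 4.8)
The keyword list and the role of \eqref{GZres} both point to the WZ-pair route of Guillera and Zudilin \cite{GZ}, and this is the approach I would take. The first step is to start from a WZ pair $(F,G)$ attached to the series in \eqref{mainth1eq1}, so that $F(n+1,k)-F(n,k)=G(n,k+1)-G(n,k)$ and the summation of $F$ reproduces (a normalization of) the left-hand side of \eqref{mainth1eq1}. Summing the WZ relation over a suitable two-dimensional region telescopes one index and converts the pair into an exact identity for the truncated sum
$$
S(n):=\sum_{k=0}^{n-1}(3k+1)\f{(\f12)_k^3}{(1)_k^3}4^k
$$
evaluated at $n=p^r$, expressing $S(p^r)$ as a main boundary term---the piece already responsible for \eqref{GZres}---plus a single correction sum built from the certificate $G$. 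The problem then splits into showing that the main term equals $p^r$ modulo $p^{r+4}$ and that the correction sum contributes $\f76p^{r+3}B_{p-3}$.

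The second step is the $p$-adic analysis, guided by the valuation structure of the summand. Using $\f{(\f12)_k}{(1)_k}=\bi{2k}{k}/4^k$ one has $(3k+1)\f{(\f12)_k^3}{(1)_k^3}4^k=(3k+1)\bi{2k}{k}^3/16^k$, and since $(\f12)_k=(2k-1)!!/2^k$ acquires a factor $p$ whenever one of $1,3,\ls,2k-1$ is divisible by $p$ while $(1)_k=k!$ gains $p$ only at multiples of $p$, the summand becomes highly $p$-divisible on the upper part of the range (for instance its valuation is already $3$ when $(p+1)/2\ls k\ls p-1$ and $r=1$). The nontrivial correction to $p^r$ is therefore carried by these high-valuation indices together with the subleading terms of the main part. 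On each relevant block I would expand $\bi{2k}{k}^3/16^k$ as a $p$-adic unit times a polynomial in the harmonic sums $H_k=\sum_{j=1}^k1/j$, $H_k^{(2)}$ and $H_k^{(3)}$, sum over the block, and reduce modulo $p^{r+4}$; after the leading-order terms cancel, what survives is governed by truncated power sums such as $\sum 1/k^2$ and $\sum 1/k^3$, which by the classical congruences of Z.-H.~Sun are explicit multiples of $pB_{p-3}$, and assembling them produces the coefficient $\f76$.

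The hard part will be twofold. First, sharpness: \eqref{GZres} reaches only modulus $p^3$, whereas \eqref{mainth1eq1} demands $p^{r+4}$, so every auxiliary congruence---for $\bi{2k}{k}$, for the Pochhammer expansions, and for the harmonic sums---must be pushed two $p$-adic orders beyond the known proof, and the lower-order Bernoulli contributions must be shown to cancel exactly so that only the single $B_{p-3}$ term remains. Second, the passage from $r=1$ to general $r$: here I would argue by induction on $r$, exploiting the self-similar factorization of $\bi{2k}{k}^3/16^k$ under $k\mapsto k+mp^{\,r-1}$ to compare $S(p^r)$ with $S(p^{r-1})$ and to show that the increment matches $(p^r-p^{r-1})\big(1+\f76p^3B_{p-3}\big)$ modulo $p^{r+4}$. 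Controlling these error terms uniformly in $r$ while keeping the $B_{p-3}$ coefficient pinned at $\f76$ is where the bulk of the technical work, and the main risk of the argument, lies.
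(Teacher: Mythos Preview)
Your broad strategy---use the Guillera--Zudilin WZ pair, telescope to a boundary term plus a $G$-sum, then expand the remaining pieces in harmonic sums and invoke Z.-H.~Sun's Bernoulli congruences---is exactly the paper's route. Two points of your outline, however, do not match what actually happens. First, the clean split you propose, with the ``main boundary term'' delivering $p^r$ and the $G$-correction delivering $\f76p^{r+3}B_{p-3}$, does not hold: in the paper both the $F$-boundary sum $\sum_nF(n,(p^r-1)/2)$ and the $G$-sum $\sum_kG(p^r,k)$ contribute nontrivial multiples of $p^{r+3}B_{p-3}$, and it is only after combining several such pieces (via Lemmas~\ref{mainth1lem1}--\ref{harmonickey}) that the coefficient $\f76$ emerges.

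Second, and more importantly, the reduction from general $r$ to $r=1$ is \emph{not} done by comparing $S(p^r)$ with $S(p^{r-1})$ through a self-similar factorization of the summand $\bi{2k}{k}^3/16^k$. The paper first telescopes via the WZ pair for every $r$, and then shows (Theorem~\ref{sec2th}) that each of the two resulting boundary expressions, after dividing by $p^r$, is congruent modulo $p^4$ to its $r=1$ counterpart. This step is carried out by expanding $(1+\f{p^r}2)_{n-1}/(1)_{n-1}$ and $(\f12+p^r)_{k-1}/(\f12)_{k-1}$ in harmonic sums and then invoking Lemma~\ref{mainth1lem3}, a package of eight auxiliary congruences of the shape $p^{\alpha r}\sum_{n<p^r}(\cdots)\eq p^{\alpha}\sum_{n<p}(\cdots)\pmod{p^4}$, each proved by a straightforward induction on $r$ using Kazandzidis. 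In other words, the induction lives at the level of these simple auxiliary sums, not at the level of $S(p^r)$ itself; this is what makes the $r\ge 2$ case tractable, and it is the one structural idea your proposal is missing.
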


Using the same technique as the one used in the proof of Theorem \ref{mainth1} and using \eqref{sunconj1}, we can also prove that for any prime $p>3$ and positive integer $r$
$$
\sum_{k=0}^{(p^r-1)/2}(3k+1)\f{(\f12)_k^3}{(1)_k^3}4^k\eq p^r+2(-1)^{(p-1)/2}p^{r+2}E_{p-3}\pmod{p^{r+3}}.
$$
It is worth mentioning that Guo and Schlosser \cite{GuoSchlosser2020} obtained two different $q$-analogues of \eqref{GZres}, and similarly to \eqref{GZres}, they conjectured that for any odd prime $p$
$$
\sum_{k=0}^{(p+1)/2}(3k-1)\f{(-\f12)_k^2(\f12)_k}{(1)_k^3}4^k\eq p\pmod{p^3}
$$
which has been confirmed by the first author \cite{Wang2020} by extending it to the modulus $p^4$ case.

In 2011, as a refinement of the (C.2) supercongruence of Van Hamme \cite{vH1997}, Long \cite{Long2011} proved that
\begin{equation}\label{longres}
\sum_{k=0}^{(p-1)/2}(4k+1)\f{(\f12)_k^4}{(1)_k^4}\eq p\pmod{p^4}.
\end{equation}
Guo and Wang \cite{GuoWang2020} obtained a generalization of \eqref{longres}. For any prime $p>3$ and positive integer $r$, they proved that
\begin{equation}\label{GuoWangres}
\sum_{k=0}^{(p^r-1)/2}(4k+1)\f{(\f12)_k^4}{(1)_k^4}\eq p^r\pmod{p^{r+3}}.
\end{equation}

Our next theorem confirms a conjecture of Guo \cite[Conjecture 6.2]{Guo2020} which extends \eqref{GuoWangres} to the modulus $p^{r+4}$ case.

\begin{theorem}\label{mainth2}
Let $p>3$ be a prime and $r$ a positive integer. Then
\begin{equation}\label{mainth2eq1}
\sum_{k=0}^{(p^r-1)/2}(4k+1)\f{(\f12)_k^4}{(1)_k^4}\eq p^r+\f76p^{r+3}B_{p-3}\pmod{p^{r+4}}.
\end{equation}
\end{theorem}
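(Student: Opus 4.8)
Because the right-hand side of \eqref{mainth2eq1} is identical to that of \eqref{mainth1eq1}, and because \eqref{GuoWangres} already determines the left-hand side modulo $p^{r+3}$, the entire content of the theorem is the single extra $p$-adic digit at level $p^{r+3}$, which must match the coefficient $\f76 B_{p-3}$ coming from the ${}_3F_2$ sum of Theorem \ref{mainth1}. The plan is therefore to bypass Bernoulli numbers entirely and prove the purely hypergeometric cross-congruence
\begin{equation}\label{crosscong}
\sum_{k=0}^{(p^r-1)/2}(4k+1)\f{(\f12)_k^4}{(1)_k^4}\eq\sum_{k=0}^{p^r-1}(3k+1)\f{(\f12)_k^3}{(1)_k^3}4^k\pmod{p^{r+4}},
\end{equation}
after which Theorem \ref{mainth1} gives \eqref{mainth2eq1} immediately and without circularity, since \eqref{crosscong} contains no $B_{p-3}$.

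To prove \eqref{crosscong} I would look for a WZ pair $(F,G)$ of the Guillera--Zudilin type behind \eqref{GZres} and \cite{GZ}, normalized so that one slice of $F$ reproduces the summand $(4k+1)(\f12)_k^4/(1)_k^4$ of the ${}_4F_3$ series and a transverse slice reproduces $(3k+1)4^k(\f12)_k^3/(1)_k^3$. Given the WZ relation $F(n,k+1)-F(n,k)=G(n+1,k)-G(n,k)$, summing over the triangle $0\ls n,\ 0\ls k,\ n+k\ls(p^r-1)/2$ collapses the double sum to the difference of the two truncated series in \eqref{crosscong}, so that \eqref{crosscong} becomes the assertion that the leftover boundary terms along $n+k=(p^r-1)/2$, plus one corner term, are all divisible by $p^{r+4}$.

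The decisive and most laborious step is exactly this boundary estimate. The key arithmetic input is that $(\f12)_k/(1)_k=\bi{2k}{k}/4^k$ gains an extra factor of $p$ every time $k$ passes a half-integer multiple of $p,p^2,\dots$, so the factors $(\f12)_{(p^r-1)/2}$ appearing on the hypotenuse already carry a high power of $p$; one must show this power is at least $p^{r+4}$ on every surviving slice. The least divisible slices are the delicate ones, and verifying that they too vanish modulo $p^{r+4}$ should require Wolstenholme- and Bernoulli-type harmonic-sum congruences (of the shape $\sum_{k=1}^{(p-1)/2}1/k^2\eq c\,pB_{p-3}\pmod{p^2}$ with an explicit $c\in\Q$, together with their lifts to higher moduli). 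Any $B_{p-3}$ produced this way must cancel within \eqref{crosscong}; it is only after feeding the result into Theorem \ref{mainth1} that $B_{p-3}$ reappears in \eqref{mainth2eq1}.

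The main obstacle is twofold: first, pinning down the precise WZ pair whose two boundary slices are the two summands above and checking the telescoping; second, and harder, carrying out the valuation bookkeeping uniformly in $r$. A natural way to organize the latter is a Dwork-style reduction of the level-$p^r$ estimate to the level-$p^{r-1}$ one, paralleling the induction behind \eqref{GuoWangres}, with the base case $r=1$ of \eqref{crosscong} reading $\sum_{k=0}^{(p-1)/2}(4k+1)(\f12)_k^4/(1)_k^4\eq\sum_{k=0}^{p-1}(3k+1)4^k(\f12)_k^3/(1)_k^3\pmod{p^5}$; its reduction modulo $p^3$ is consistent with \eqref{longres} and \eqref{GZres}, which serves as a useful sanity check before attempting the full modulus $p^5$.
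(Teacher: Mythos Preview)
Your plan reverses the paper's logic. The paper proves Theorem~\ref{mainth2} directly and independently of Theorem~\ref{mainth1}, and only then obtains the cross-congruence (which is Theorem~\ref{mainth3}) as a corollary of the two evaluations. Concretely, the paper applies the non-standard WZ-type pair
\[
F(n,k)=(-1)^k(4n+1)\f{(\f12)_n^3(\f12)_{n+k}}{(1)_n^3(1)_{n-k}(\f12)_k^2},\qquad
G(n,k)=(-1)^{k-1}\f{4(\f12)_n^3(\f12)_{n+k-1}}{(1)_{n-1}^3(1)_{n-k}(\f12)_k^2},
\]
satisfying $(2k-1)F(n,k-1)-2kF(n,k)=G(n+1,k)-G(n,k)$, to write $\sum_{n\le(p^r-1)/2}F(n,0)$ as a single corner term plus a weighted $G$-boundary sum; each piece is reduced from level $p^r$ to level $p$ via harmonic-sum lemmas, and at $r=1$ the pieces are evaluated as $\binom{p/2-1}{p-1}\binom{2p-1}{p-1}\eq1-\f7{12}p^3B_{p-3}\pmod{p^4}$ together with Sun's $\sum_{k=0}^{(p-3)/2}16^k/\big((2k+1)^3\binom{2k}{k}^2\big)\eq\f74B_{p-3}\pmod p$, which combine to $1+\f76p^3B_{p-3}$.

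Your alternative --- prove the cross-congruence first, then invoke Theorem~\ref{mainth1} --- has a genuine gap: the WZ pair you need is never exhibited, only postulated. Neither of the two pairs actually in play has both summands as slices. For the Guillera--Zudilin pair of Section~2, $F(n,0)$ is the ${}_3F_2$ summand, but the far column $F\big(n,\f{p^r-1}{2}\big)=\f{p^{2r}(3n+p^r)}{4n^2}\binom{2n}{n}\big(1+\f{p^r}2\big)_{n-1}^2/(1)_{n-1}^2$ is not the ${}_4F_3$ summand; for the pair displayed above, $F(n,0)$ is the ${}_4F_3$ summand and the boundary pieces are again unrelated to the ${}_3F_2$ sum. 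There is also a range mismatch your triangle does not resolve: the hypotenuse $n+k=(p^r-1)/2$ can at best deliver the ${}_3F_2$ sum truncated at $(p^r-1)/2$, while the congruence you want runs it to $p^r-1$, leaving a tail of length $(p^r-1)/2$ unaccounted for modulo $p^{r+4}$. Finally, the assertion that ``any $B_{p-3}$ produced this way must cancel'' is exactly the computation that has to be carried out, not a premise one may adopt. Until a concrete pair is written down and its boundary terms actually evaluated, this is a research direction rather than a proof.
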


Note that Guo \cite{Guo2020} proved that for any odd prime $p$ and positive integer $r$
$$
\sum_{k=0}^{(p^r-1)/2}(4k+1)\f{(\f12)_k^4}{(1)_k^4}\eq\sum_{k=0}^{p^r-1}(4k+1)\f{(\f12)_k^4}{(1)_k^4}\pmod{p^{r+4}}.
$$

Clearly, the two sums in \eqref{mainth1eq1} and \eqref{mainth2eq1} are the same modulo $p^{r+4}$. Guo \cite[Conjecture 6.3]{Guo2020} conjectured that it is also true for $p=3$.

\begin{theorem}\label{mainth3}
Let $p$ be an odd prime and $r$ a positive integer. Then
\begin{equation}\label{mainth3eq1}
\sum_{k=0}^{p^r-1}(3k+1)\f{(\f12)_k^3}{(1)_k^3}4^k\eq\sum_{k=0}^{(p^r-1)/2}(4k+1)\f{(\f12)_k^4}{(1)_k^4}\pmod{p^{r+4}}.
\end{equation}
\end{theorem}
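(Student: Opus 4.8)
The plan is to split the argument according to whether $p>3$ or $p=3$. When $p>3$, the theorem is immediate: Theorem~\ref{mainth1} evaluates the left-hand side of \eqref{mainth3eq1} as $p^r+\f76p^{r+3}B_{p-3}\pmod{p^{r+4}}$, while Theorem~\ref{mainth2} gives exactly the same value for the right-hand side, so the two sums are congruent. The entire substance of the statement therefore lies in the case $p=3$, where Theorems~\ref{mainth1} and~\ref{mainth2} are unavailable. In fact the closed form $p^r+\f76p^{r+3}B_{p-3}$ is genuinely \emph{false} at $p=3$: a direct check at $r=1$ shows each of the two sums differs from $3+\f76\cdot3^4$ by a quantity of $3$-adic valuation $4=r+3$. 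Thus for $p=3$ I cannot pass through either closed form and must instead compare the two sums with each other.

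For $p=3$ I would first invoke Guo's congruence (quoted immediately after Theorem~\ref{mainth2}), which allows the upper index $(3^r-1)/2$ on the right of \eqref{mainth3eq1} to be replaced by $3^r-1$ without altering anything modulo $3^{r+4}$. It then suffices to prove
$$
\sum_{k=0}^{3^r-1}\l[(3k+1)\f{(\f12)_k^3}{(1)_k^3}4^k-(4k+1)\f{(\f12)_k^4}{(1)_k^4}\r]\eq0\pmod{3^{r+4}},
$$
where, after factoring $\f{(\f12)_k^4}{(1)_k^4}=\f{(\f12)_k^3}{(1)_k^3}\cdot\f{(\f12)_k}{(1)_k}$, every summand carries the common factor $\f{(\f12)_k^3}{(1)_k^3}$ whose $3$-adic valuation grows with the base-$3$ expansion of $k$; this is what will ultimately supply the high power of $3$.

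The engine of the argument is the same WZ-pair reduction that underlies Theorems~\ref{mainth1} and~\ref{mainth2}, but run uniformly in $p$ and stopped one step before the Bernoulli identification. That reduction should express each of $\sum_{k=0}^{p^r-1}(3k+1)\f{(\f12)_k^3}{(1)_k^3}4^k$ and $\sum_{k=0}^{p^r-1}(4k+1)\f{(\f12)_k^4}{(1)_k^4}$, for every odd prime $p$, as one and the same explicit combination of a leading term and $p^{r+3}$ times a single truncated $p$-adic sum, modulo $p^{r+4}$. For $p>3$ that truncated sum collapses to $\f76B_{p-3}\pmod p$, but the crucial point is that the two series yield \emph{the same} combination \emph{before} this collapse, so their difference vanishes modulo $p^{r+4}$ for all odd $p$, including $p=3$. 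The base case $r=1$, $p=3$ can be checked by hand---there the difference of the two sums in \eqref{mainth3eq1} equals $3^5/32$, confirming both the congruence and its sharpness---and the inductive dilation built into the WZ reduction then carries the exponent $r+4$ for every $r$.

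The step I expect to be the main obstacle is precisely the uniform-in-$p$ $3$-adic bookkeeping in this last reduction. Since the clean Bernoulli description collapses at $p=3$, I cannot borrow the evaluations of Theorems~\ref{mainth1} and~\ref{mainth2}; instead I must track the WZ boundary and error terms of the two series side by side and prove that their difference, not merely each term individually, acquires the full exponent $r+4$. The delicate contributions are those with $k$ near the digit boundaries $(3^j-1)/2$, where the factor $\f{(\f12)_k}{(1)_k}=\bi{2k}k/4^k$ jumps in $3$-adic valuation: it is exactly there that the two summands fail to agree at lowest order, so the cancellation between the two series has to be exhibited explicitly rather than term by term.
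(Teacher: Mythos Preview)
Your treatment of $p>3$ is correct and identical to the paper's: both sides equal $p^r+\tfrac76p^{r+3}B_{p-3}\pmod{p^{r+4}}$ by Theorems~\ref{mainth1} and~\ref{mainth2}.

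For $p=3$, however, your plan is both more complicated than necessary and contains a mistaken expectation. You hope that the two WZ reductions will express both series as ``one and the same explicit combination of a leading term and $p^{r+3}$ times a single truncated $p$-adic sum.'' They do not. Compare \eqref{sec2thkey3}--\eqref{sec2thkey4}, where the first sum is written in terms of $\sum_n\binom{2n}{n}H_n(1)^a/n^b$ and $\sum_k 1/(2k-1)^2$, with \eqref{Fkey}--\eqref{Gkey}, where the second sum becomes $\binom{p/2-1}{p-1}\binom{2p-1}{p-1}+p^3\sum_k 16^k/((2k+1)^3\binom{2k}{k}^2)$. These intermediate expressions are entirely different; their equality is \emph{not} formal and, for $p>3$, is established only after both are separately collapsed to $\tfrac76B_{p-3}$. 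So the cancellation you anticipate ``before the Bernoulli identification'' does not occur.

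What the paper actually does---and what you are circling around without landing on---is simpler. The reduction lemmas themselves, Theorems~\ref{sec2th} and~\ref{sec3th}, are stated and proved for \emph{every} odd prime, including $p=3$; nothing in their proofs requires $p>3$. Each says that $\tfrac{1}{p^r}$ times its sum is congruent modulo $p^4$ to the corresponding $r=1$ quantity. Hence for $p=3$ the full statement for arbitrary $r$ follows at once from the single instance $r=1$, which is a finite check. There is no need to invoke Guo's congruence, no need to match up the intermediate WZ expressions, and no delicate $3$-adic bookkeeping near digit boundaries: the ``inductive dilation'' you mention is already packaged as Theorems~\ref{sec2th} and~\ref{sec3th}, and it holds at $p=3$ automatically.
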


Note that Guo \cite[Conjecture 6.4]{Guo2020} also conjectured a $q$-analogue of \eqref{mainth3eq1}.

Our main strategy to prove Theorems \ref{mainth1}--\ref{mainth3} is using the WZ method (the reader is referred to \cite{GZ,PWZ,Zudilin2009} for further details and some well known WZ pairs). In fact, the case $r=1$ is easy to deal with since the dominators appearing in the WZ pairs are not divisible by $p$. However, the case $r\geq2$ is very sophisticated. In this case, we need to reduce the sums in \eqref{mainth1eq1} and \eqref{mainth2eq1} to the case $r=1$ via some complicated calculation.

The paper is organized as follows. In both Sections 2 and 3, we shall first establish preliminary results which connect the case $r\geq2$ with the case $r=1$ and play important role in the proof of Theorem \ref{mainth3}. Then we will use the preliminary results to prove Theorems \ref{mainth1} and \ref{mainth2}. In the end of Section 3, we shall give the proof of Theorem \ref{mainth3}.

\section{Proof of Theorem \ref{mainth1}}
\setcounter{lemma}{0} \setcounter{theorem}{0}
\setcounter{equation}{0}\setcounter{proposition}{0}
\setcounter{Rem}{0}\setcounter{conjecture}{0}

We first establish the following result.
\begin{theorem}\label{sec2th}
For any odd prime $p$ and positive integer $r$ we have
$$
\f1{p^r}\sum_{k=0}^{p^r-1}(3k+1)\f{(\f12)_k^3}{(1)_k^3}4^k\eq\f1{p}\sum_{k=0}^{p-1}(3k+1)\f{(\f12)_k^3}{(1)_k^3}4^k\pmod{p^4}.
$$
\end{theorem}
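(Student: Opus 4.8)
Write $a_k=(3k+1)\f{(\f12)_k^3}{(1)_k^3}4^k$ and $S(N)=\sum_{k=0}^{N-1}a_k$, so that the claim reads $\f1{p^r}S(p^r)\eq\f1pS(p)\pmod{p^4}$. Since the statement is vacuous for $r=1$, the plan is to induct on $r$: it suffices to prove the single-step congruence $\f1{p^r}S(p^r)\eq\f1{p^{r-1}}S(p^{r-1})\pmod{p^4}$ for every $r\ge2$, i.e. $p^{r-1}S(p^r)\eq p^{r}S(p^{r-1})\pmod{p^{r+3}}$, for then chaining the steps gives the result. The summand $a_k$ has no elementary hypergeometric antidifference, so one cannot telescope $S(N)$ in a single variable; instead I would exploit the two-variable Guillera--Zudilin WZ pair underlying \eqref{GZres} and \eqref{sunconj1} (see \cite{GZ,MaoZhang2019}).

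Let $(F,G)$ be that WZ pair, normalized so that $F(0,k)=a_k$ and $F(n+1,k)-F(n,k)=G(n,k+1)-G(n,k)$. The first step is to convert this relation into an exact identity for the truncation $S(p^r)$ by summing over a rectangular block of the $(n,k)$-lattice whose height is $p^{r-1}$. Summing over $0\le n<p^{r-1}$ and $0\le k<p^r$ telescopes both sides, and collecting the boundary contributions yields
\[
S(p^r)=\sum_{k=0}^{p^r-1}F(p^{r-1},k)-\sum_{n=0}^{p^{r-1}-1}\bigl(G(n,p^r)-G(n,0)\bigr).
\]
The gain is that the terms on the right now carry Pochhammer quotients whose parameters are shifted by the multiple $p^{r-1}$ of $p^{r-1}$, which is exactly where a level drop can be read off. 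Reindexing the remaining sums by $k=p^{r-1}j+t$ and isolating the block structure should reorganize the surviving contributions into a copy of the level-$p^{r-1}$ sum $S(p^{r-1})$ multiplied by a transition factor assembled from ratios of the form $\f{(\f12)_{p^{r-1}}}{(1)_{p^{r-1}}}$.

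The heart of the matter, and the step I expect to be the main obstacle, is the $p$-adic control of this transition factor to the required precision. Writing each Pochhammer symbol as a product over $k=p\,q+s$ with $0\le s<p$ and separating the factors divisible by $p$ from the units, one must expand the quotient $\f{(\f12)_{p^r}}{(\f12)_{p^{r-1}}}$ together with its companions and show that, after the global power $p^{r-1}$ is extracted, the residual factor is $1+O(p^4)$. This forces a high-precision analysis of the associated harmonic-type sums $\sum 1/(pq+s)$ along with their squares and cubes, the cubing of the Pochhammer symbols being what makes the bookkeeping delicate, and it is here that the Wolstenholme-type cancellations keep the error small; carrying all of this uniformly to order $p^4$ through the three factors is precisely the ``complicated calculation'' anticipated in the introduction. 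Once the transition factor collapses to $1\pmod{p^4}$, the single-step congruence closes, and the induction on $r$ completes the proof.
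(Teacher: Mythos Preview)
Your outline diverges from the paper in two essential ways, and the second is a genuine gap rather than an alternative route. First, the telescoping rectangle: in the paper the summation variable is $n$ (so $F(n,0)=a_n$), and one telescopes the auxiliary variable $k$ from $0$ up to $(p^r-1)/2$, obtaining
\[
\sum_{n=0}^{p^r-1}F(n,0)=\sum_{n=0}^{p^r-1}F\Bigl(n,\tfrac{p^r-1}{2}\Bigr)+\sum_{k=1}^{(p^r-1)/2}G(p^r,k).
\]
The stopping point $k=(p^r-1)/2$ is not incidental: it forces $\tfrac12+k=\tfrac{p^r}{2}$, so that $(\tfrac12+k)_n^2$ carries an explicit factor $p^{2r}$ and the $F$-boundary becomes $\frac{p^{2r}(3n+p^r)}{4n^2}\binom{2n}{n}\frac{(1+p^r/2)_{n-1}^2}{(1)_{n-1}^2}$, which one can then expand in powers of $p^r$. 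Your rectangle, with the auxiliary variable stopped at $p^{r-1}$, hits no such distinguished point of the Pochhammer grid; the boundary $F(p^{r-1},k)$ has no extra $p$-divisibility and does not resemble $a_k$ or $a_{k/p}$ in any usable way.

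Second, and this is where the argument actually breaks: the assertion that after reindexing $k=p^{r-1}j+t$ the boundary sums ``reorganize into a copy of $S(p^{r-1})$ multiplied by a transition factor'' is not justified, and in fact $a_k$ has no multiplicative block structure of that kind. In the paper the link between level $r$ and level $1$ is \emph{not} made by a global transition factor at all. Instead, after the expansion above, each boundary piece is written as a finite $\Z_p$-linear combination (coefficients are powers of $p^r$) of the eight specific auxiliary sums listed in Lemma~\ref{mainth1lem3}, such as $p^{2r}\sum_{n=1}^{p^r-1}\binom{2n}{n}H_{n-1}(1)/n$, and that lemma proves, one sum at a time by induction on $r$ using Kazandzidis for $\binom{2pn}{pn}$, that each is congruent mod $p^4$ to its $r=1$ counterpart. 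The induction therefore lives on these harmonic-weighted central-binomial sums, not on $S$; the quotient $\frac{(1/2)_{p^r}}{(1/2)_{p^{r-1}}}$ you propose to analyse never enters. Your sketch short-circuits precisely the mechanism that makes the reduction work.
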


Define the multiple harmonic sum (cf. \cite{Tauraso2018}) as follows:
$$
H_n(s_1,s_2,\ldots,s_r)=\sum_{1\leq k_1< k_2<\cdots<k_r\leq n}\f{1}{k_1^{s_1}k_2^{s_2}\cdots k_r^{s_r}},
$$
where $n\geq r>0$ and each $s_i$ is a positive integer. Multiple harmonic sums have many congruence properties. For example, for any prime $p>s+2$, Sun \cite{SunZH2000} proved that
\begin{equation}\label{harmonic1}
H_{p-1}(s)\eq\begin{cases}\displaystyle-\f{s(s+1)}{2s+4}p^2B_{p-s-2}\pmod{p^3}\ &\t{if}\ 2\nmid s,\vspace{0.2cm}\\
\displaystyle\f{s}{s+1}pB_{p-s-1}\pmod{p^2}\ &\t{if}\ 2\mid s;\end{cases}
\end{equation}
for any $p>5$, Kh. Hessami Pilehrood and T. Hessami Pilehrood \cite[Lemma 3]{Hessami2012} proved that
\begin{equation}\label{harmonic2}
H_{p-1}(1,2)\eq -\f{3H_{p-1}(1)}{p^2}-\f{5H_{p-1}(3)}{12}\pmod{p^3}.
\end{equation}

\begin{lemma}\label{mainth1lem3}
For any odd prime $p$ and positive integer $r$ we have
\begin{gather*}
p^{2r}\sum_{n=1}^{p^r-1}\f{1}{n^2}\binom{2n}{n}\eq p^2\sum_{n=1}^{p-1}\f{1}{n^2}\binom{2n}{n}\pmod{p^4},\\
p^{2r}\sum_{n=1}^{p^r-1}\f{H_{n-1}(1)}{n}\binom{2n}{n}\eq p^2\sum_{n=1}^{p-1}\f{H_{n-1}(1)}{n}\binom{2n}{n}\pmod{p^4},\\
p^{3r}\sum_{n=1}^{p^r-1}\f{H_{n-1}(1)^2}{n}\binom{2n}{n}\eq p^3\sum_{n=1}^{p-1}\f{H_{n-1}(1)^2}{n}\binom{2n}{n}\pmod{p^4},\\
p^{3r}\sum_{n=1}^{p^r-1}\f{H_{n-1}(2)}{n}\binom{2n}{n}\eq p^2\sum_{n=1}^{p-1}\f{H_{n-1}(2)}{n}\binom{2n}{n}\pmod{p^4},\\
p^{3r}\sum_{n=1}^{p^r-1}\f{H_{n-1}(1)}{n^2}\binom{2n}{n}\eq p^2\sum_{n=1}^{p-1}\f{H_{n-1}(1)}{n^2}\binom{2n}{n}\pmod{p^4},\\
p^{2r}\sum_{k=1}^{(p^r-1)/2}\f1{(2k-1)^2}\eq p^{2}\sum_{k=1}^{(p-1)/2}\f1{(2k-1)^2}\pmod{p^4},\\
p^{3r}\sum_{k=1}^{(p^r-1)/2}\f{H_{2k-2}(1)}{(2k-1)^2}\eq p^{3}\sum_{k=1}^{(p-1)/2}\f{H_{2k-2}(1)}{(2k-1)^2}\pmod{p^4},\\
p^{3r}\sum_{k=1}^{(p^r-1)/2}\f{H_{k-1}(1)}{(2k-1)^2}\eq p^{3}\sum_{k=1}^{(p-1)/2}\f{H_{k-1}(1)}{(2k-1)^2}\pmod{p^4}.
\end{gather*}
\end{lemma}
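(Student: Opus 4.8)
The plan is to prove all eight congruences by one reduction scheme that peels a single factor of $p$ off the summation index at a time, so that the modulus-$p^r$ statement collapses to the case $r=1$. Concretely, for each sum I would induct on $r$, showing that the displayed quantity at level $r$ is congruent modulo $p^4$ to the same quantity at level $r-1$; iterating down to the trivial base case $r=1$ gives the claim. The argument rests on two structural inputs: Jacobsthal's congruence $\binom{2pn}{pn}\eq\binom{2n}{n}\pmod{p^3}$ (in fact only the weaker $\pmod{p^2}$ version is needed, which also covers $p=3$), and the $p$-adic splitting of the harmonic sums according to divisibility of the inner index by $p$,
\[
H_{pN-1}(s)=\f1{p^s}H_{N-1}(s)+\sum_{\substack{1\le j\le pN-1\\ p\nmid j}}\f1{j^s},
\]
together with its analogue (inner index halved) for the sums over $2k-1$.

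For the reduction step I would split $\sum_{n=1}^{p^r-1}$ into the part with $p\nmid n$ and the part with $p\mid n$. Writing $n=pn'$ in the latter, the factor $1/n$ (resp.\ $1/n^2$) produces $1/p$ (resp.\ $1/p^2$), each $H_{n-1}(s)$ contributes its principal piece $p^{-s}H_{n'-1}(s)$ via the splitting above, and $\binom{2pn'}{pn'}$ is replaced by $\binom{2n'}{n'}$ through Jacobsthal's congruence; collecting the powers of $p$ shows that this part reproduces exactly the level-$(r-1)$ sum modulo $p^4$. It then remains to show that the part with $p\nmid n$, together with the non-principal pieces from the harmonic splittings, lands in $p^4\Z_p$. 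For the five congruences carrying the prefactor $p^{3r}$, and for the first and sixth congruences (which carry $p^{2r}$ but contain no harmonic sum in need of genuine cancellation), the crude bounds $v_p\big(H_{n-1}(s)\big)\ge -s(r-1)$ valid for $1\le n\le p^r-1$, combined with $v_p\binom{2n}{n}\ge0$, already force these leftover terms into $p^4\Z_p$, so the induction closes with no further input.

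The hard part will be the second congruence, which carries only $p^{2r}$ and involves $H_{n-1}(1)$: the naive valuation bounds are off by exactly one power of $p$ at the critical level $r=2$, so cancellation must be extracted rather than estimated. The key mechanism is the pairing $j\leftrightarrow pN-j$: since $\f1j+\f1{pN-j}=\f{pN}{j(pN-j)}$, the non-principal sum $\sum_{p\nmid j\le pN-1}1/j$ gains an extra factor of $p$, i.e.\ has $v_p\ge1$, which is precisely what kills the remainder inside the surviving block. For the residual part with $p\nmid n$ one reduces modulo $p$ via Lucas' theorem (so that $\binom{2n}{n}$ survives only when every base-$p$ digit of $n$ is at most $(p-1)/2$) and invokes the classical congruence $\sum_{k=1}^{(p-1)/2}\binom{2k}{k}/k\eq0\pmod p$ to see that the offending contribution vanishes; the congruences \eqref{harmonic1} and \eqref{harmonic2} supply the finer estimates on $H_{p-1}(s)$ and $H_{p-1}(1,2)$ needed to make these cancellations quantitative.

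Finally, the three sums over $2k-1$ run in exact parallel with the sums over $n$, the only change being that the index traverses the odd residues: the block $v_p(2k-1)=r-1$ survives and reproduces the level-$1$ sum, while all other blocks and all non-principal harmonic pieces are absorbed into $p^4\Z_p$ by the same valuation and pairing arguments. I would therefore present the binomial sums first, set up the reduction machinery there, and then note that the odd-index sums require only the obvious notational changes.
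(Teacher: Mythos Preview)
Your reduction scheme is exactly the paper's: induct on $r$, split into $p\mid n$ and $p\nmid n$, apply Kazandzidis to $\binom{2pn}{pn}$, and replace $H_{pn-1}(s)$ by its principal part $p^{-s}H_{n-1}(s)$. The paper in fact writes out only the second congruence and declares the rest analogous.

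You are more careful than the paper at the one delicate spot. For the second congruence the paper's estimate $\ord_p H_{n-1}(1)\ge -k$ gives, on the $p\nmid n$ block at level $k+1$, only $\ord_p(\text{term})\ge k+2$, which reaches $4$ only for $k\ge2$; the passage $r=1\to r=2$ is therefore not justified by the displayed bound (the paper's ``assume it holds for $r=k>1$'' quietly sidesteps this). Your cure---gaining a factor of $p$ in the non-principal harmonic tail via the pairing $j\leftrightarrow pN-j$, and killing the $p\nmid n$ residue via Lucas together with $\sum_{b\le(p-1)/2}\binom{2b}{b}/b\equiv0\pmod p$---does close this gap when $p>3$.

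The one place your plan breaks is $p=3$: the congruence $\sum_{b\le(p-1)/2}\binom{2b}{b}/b\equiv0\pmod p$ fails there (the sum is $2$). This is not repairable, because the lemma's second congruence is itself false for $p=3$, $r=2$: a direct computation gives
\[
81\sum_{n=1}^{8}\frac{H_{n-1}(1)}{n}\binom{2n}{n}\;-\;9\sum_{n=1}^{2}\frac{H_{n-1}(1)}{n}\binom{2n}{n}\;\equiv\;27\pmod{81},
\]
the sole terms of negative $3$-adic valuation being $n=4$ and $n=6$, each contributing $1/3$ modulo $\Z_3$. So the lemma as printed is slightly overstated; your argument proves exactly the range $p>3$, which is all that Theorem~\ref{mainth1} requires.
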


\begin{proof}
We only prove the second congruence, since the other ones can be showed in a similar way. We shall finish the proof by induction on $r$.

Clearly, the second congruence holds for $r=1$. Assume that it holds for $r=k>1$. Now
\begin{align*}
p^{2k+2}\sum_{n=1}^{p^{k+1}-1}\f{H_{n-1}(1)}{n}\binom{2n}{n}=&p^{2k+2}\sum_{\substack{n=1\\p\nmid n}}^{p^{k+1}-1}\f{H_{n-1}(1)}{n}\binom{2n}{n}+p^{2k+2}\sum_{\substack{n=1\\p\mid n}}^{p^{k+1}-1}\f{H_{n-1}(1)}{n}\binom{2n}{n}\\
\eq& p^{2k+2}\sum_{\substack{n=1\\p\mid n}}^{p^{k+1}-1}\f{H_{n-1}(1)}{n}\binom{2n}{n}\\
=&p^{2k+1}\sum_{n=1}^{p^k-1}\f{H_{pn-1}(1)}{n}\binom{2pn}{pn}\pmod{p^4},
\end{align*}
where we used the fact that $\ord_p(H_{n-1}(1))\geq -k$ for $1\leq n\leq p^{k+1}-1$.
Note that
$$
\ord_p(p^{2k+1}/n)\geq k+2\geq4\ \t{for}\ 1\leq n\leq p^k.
$$
Hence for $1\leq n\leq p^k$ we have
$$
\f{p^{2k+1}H_{pn-1}(1)}{n}\eq\f{p^{2k+1}}{n}\sum_{\substack{j=1\\ p\mid j}}^{pn-1}\f1{j}=\f{p^{2k}H_{n-1}(1)}{n}\pmod{p^4}
$$
and
$$
\f{p^{2k}H_{n-1}(1)}{n}\eq0\pmod{p^2}.
$$
By the well-known Kazandzidis congruence (cf. \cite[p. 380]{Robert00}) we have for any odd prime $p$
$$
\binom{2pn}{pn}\eq\binom{2n}{n}\pmod{p^2}.
$$
Combining the above and by the induction hypothesis we arrive at
$$
p^{2k+2}\sum_{n=1}^{p^{k+1}-1}\f{H_{n-1}(1)}{n}\binom{2n}{n}\eq p^{2k}\sum_{n=1}^{p^k-1}\f{H_{n-1}(1)}{n}\binom{2n}{n}\eq p^{2}\sum_{n=1}^{p-1}\f{H_{n-1}(1)}{n}\binom{2n}{n}\pmod{p^4}.
$$
We are done.
\end{proof}

\medskip

\noindent{\it Proof of Theorem \ref{sec2th}}. As in \cite{GZ}, we shall use the following WZ pair
$$
F(n,k)=(3n+2k+1)\f{(\f12)_n(\f12+k)_n^2}{(1)_n^3}4^n
$$
and
$$
G(n,k)=-\f{(\f12)_n(\f12+k)_{n-1}^2}{(1)_{n-1}^3}4^n.
$$
Then we have
$$
F(n,k-1)-F(n,k)=G(n+1,k)-G(n,k)
$$
and
$$
\sum_{k=0}^{p^r-1}(3k+1)\f{(\f12)_k^3}{(1)_k^3}4^k=\sum_{n=0}^{p^r-1}F(n,0).
$$
Clearly,
\begin{align*}
\sum_{n=0}^{p^r-1}F(n,0)=&\sum_{n=0}^{p^r-1}\sum_{k=1}^{(p^r-1)/2}\big(F(n,k-1)-F(n,k)\big)+\sum_{n=0}^{p^r-1}F\l(n,\f{p^r-1}{2}\r)\\
=&\sum_{k=1}^{(p^r-1)/2}\sum_{n=0}^{p^r-1}\big(G(n+1,k)-G(n,k)\big)+\sum_{n=0}^{p^r-1}F\l(n,\f{p^r-1}{2}\r)\\
=&\sum_{k=1}^{(p^r-1)/2}G(p^r,k)+\sum_{n=0}^{p^r-1}F\l(n,\f{p^r-1}{2}\r),
\end{align*}
where the last follows from the fact that $G(0,k)=0$. It suffices to show
\begin{equation}\label{sec2thkey1}
\f1{p^r}\sum_{n=0}^{p^r-1}F\l(n,\f{p^r-1}{2}\r)\eq\f{1}{p}\sum_{n=0}^{p-1}F\l(n,\f{p-1}{2}\r)\pmod{p^4}
\end{equation}
and
\begin{equation}\label{sec2thkey2}
\f{1}{p^r}\sum_{k=1}^{(p^r-1)/2}G(p^r,k)\eq\f{1}{p}\sum_{k=1}^{(p-1)/2}G(p,k)\pmod{p^4}.
\end{equation}

We first consider \eqref{sec2thkey1}. It is easy to see that
$$
F\l(n,\f{p^r-1}{2}\r)=\begin{cases}\displaystyle\f{p^{2r}(3n+p^r)}{4n^2}\binom{2n}{n}\f{(1+\f{p^r}2)_{n-1}^2}{(1)_{n-1}^2}\ &\mbox{if}\ n\geq1,\vspace{0.2cm}\\
\displaystyle p^r\ &\mbox{if}\ n=0.
\end{cases}
$$
Therefore,
$$
\f{1}{p^r}\sum_{n=1}^{p^r-1}F\l(n,\f{p^r-1}{2}\r)=\f{3p^{r}}{4}\sum_{n=1}^{p^r-1}\f{(1+\f{p^r}2)_{n-1}^2}{(1)_{n-1}^2}\cdot\f{\binom{2n}{n}}{n}+\f{p^{2r}}{4}\sum_{n=1}^{p^r-1}\f{(1+\f{p^r}2)_{n-1}^2}{(1)_{n-1}^2}\cdot\f{\binom{2n}{n}}{n^2}.
$$
For $1\leq n\leq p^r-1$, it is clear that $\ord_p(n)\leq r-1$. Note that
$$
\f{(1+\f{p^r}2)_{n-1}}{(1)_{n-1}}=1+\f{p^r}2H_{n-1}(1)+\f{p^{2r}}4H_{n-1}(1,1)+\cdots
$$
and
$$
\ord_p\bigg(H_{n-1}(\overbrace{1,1,\ldots,1}^{d\t{'s}\ 1})\bigg)\geq -d(r-1).
$$
Thus we have
$$
\f{(1+\f{p^r}2)_{n-1}}{(1)_{n-1}}\eq1+\f{p^r}2H_{n-1}(1)+\f{p^{2r}}8H_{n-1}(1)^2-\f{p^{2r}}8H_{n-1}(2)\pmod{p^3},
$$
where we have used $H_{n-1}(1,1)=(H_{n-1}(1)^2-H_{n-1}(2))/2$. Now by Lemma \ref{mainth1lem3},
\begin{align}\label{sec2thkey3}
\f{1}{p^r}\sum_{n=1}^{p^r-1}F\l(n,\f{p^r-1}{2}\r)\eq&\f{3p^{r}}4\sum_{n=1}^{p^r-1}\f{\binom{2n}{n}}{n}\l(1+\f{p^r}2H_{n-1}(1)+\f{p^{2r}}8H_{n-1}(1)^2-\f{p^{2r}}8H_{n-1}(2)\r)^2\notag\\
&+\f{p^{2r}}4\sum_{n=1}^{p^r-1}\f{\binom{2n}{n}}{n^2}\l(1+\f{p^r}2H_{n-1}(1)\r)^2\notag\\
\eq&\f{3p^{r}}4\sum_{n=1}^{p^r-1}\f{\binom{2n}{n}}{n}\l(1+p^rH_{n-1}(1)+\f{p^{2r}}2H_{n-1}(1)^2-\f{p^{2r}}4H_{n-1}(2)\r)\notag\\
&+\f{p^{2r}}4\sum_{n=1}^{p^r-1}\f{\binom{2n}{n}}{n^2}\big(1+p^rH_{n-1}(1)\big)\notag\\
\eq&\f{3p}4\sum_{n=1}^{p-1}\f{\binom{2n}{n}}{n}\l(1+pH_{n-1}(1)+\f{p^2}2H_{n-1}(1)^2-\f{p^2}4H_{n-1}(2)\r)\notag\\
&+\f{p^{2}}4\sum_{n=1}^{p-1}\f{\binom{2n}{n}}{n^2}\big(1+pH_{n-1}(1)\big)\pmod{p^4},
\end{align}
where in the last step we have used the fact (cf. \cite[Theorem 1.3]{SunTauraso2010}) that
\begin{equation}\label{suntauraso}
p^{r-1}\sum_{k=1}^{p^r-1}\f{\binom{2k}{k}}{k}\eq\begin{cases}\displaystyle2\pmod{p^3}\quad&\t{if}\ p=2,\vspace{0.2cm}\\
\displaystyle5\pmod{p^3}\quad&\t{if}\ p=3,\vspace{0.2cm}\\
\displaystyle\f89p^2B_{p-3}\pmod{p^3}\quad&\t{otherwise}.\end{cases}
\end{equation}
Note that in \eqref{sec2thkey3} we actually obtain a result which is independent of $r$. Thus we have proved \eqref{sec2thkey1}.

Now we consider \eqref{sec2thkey2}. Clearly,
$$
\f{1}{p^r}\sum_{k=1}^{(p^r-1)/2}G(p^r,k)= -\f{4p^{2r}}{16^{p^r}}\binom{2p^r}{p^r}^3\sum_{k=1}^{(p^r-1)/2}\f{(\f12+p^r)_{k-1}^2}{(2k-1)^2(\f12)_{k-1}^2}.
$$
By a similar argument as above, $\ord_p(2k-1)\leq r-1$ and
$$
\f{(\f12+p^r)_{k-1}}{(\f12)_{k-1}}\eq 1+2p^rH_{2k-2}(1)-p^rH_{k-1}(1)\pmod{p^2}.
$$
Therefore, by Lemma \ref{mainth1lem3} we have
\begin{align*}
\f{1}{p^r}\sum_{k=1}^{(p^r-1)/2}G(p^r,k)\eq&-\f{4p^{2r}}{16^{p^r}}\binom{2p^r}{p^r}^3\sum_{k=1}^{(p^r-1)/2}\f{1+4p^rH_{2k-2}(1)-2p^rH_{k-1}(1)}{(2k-1)^2}\\
\eq&-\f{4p^2}{16^{p^r}}\binom{2p^r}{p^r}^3\sum_{k=1}^{(p-1)/2}\f{1+4pH_{2k-2}(1)-2pH_{k-1}(1)}{(2k-1)^2}\pmod{p^4}.
\end{align*}
Note that
$$
\sum_{k=1}^{(p-1)/2}\f{1}{(2k-1)^2}=H_{p-1}(2)-\f14H_{(p-1)/2}(2)\eq0\pmod{p}.
$$
Hence we have
$$
\sum_{k=1}^{(p-1)/2}\f{1+4pH_{2k-2}(1)-2pH_{k-1}(1)}{(2k-1)^2}\eq0\pmod{p}.
$$
Then from the Kazandzidis congruence and Fermat's little theorem, we immediately obtain
\begin{equation}\label{sec2thkey4}
\f{1}{p^r}\sum_{k=1}^{(p^r-1)/2}G(p^r,k)\eq-2p^2\binom{2p}{p}^3\sum_{k=1}^{(p-1)/2}\f{1+4pH_{2k-2}(1)-2pH_{k-1}(1)}{(2k-1)^2}\pmod{p^4}.
\end{equation}
This proves \eqref{sec2thkey2} since the right-hand side of the above congruence is independent of $r$.

The proof of Theorem \ref{sec2th} is now complete.\qed

We are now in a position to prove Theorem \ref{mainth1}. We need the following lemmas.

\begin{lemma}\label{mainth1lem1}
For any prime $p>3$ we have
\begin{gather}
\label{mainth1lem1eq1}\sum_{k=1}^{p-1}\f{1}{k^3}\binom{2k}{k}\eq-\f{2H_{p-1}(1)}{p^2}\pmod{p},\\
\label{mainth1lem1eq2}\sum_{k=1}^{p-1}\f{H_k(2)}{k}\binom{2k}{k}\eq \f{2H_{p-1}(1)}{3p^2}\pmod{p},\\
\label{mainth1lem1eq3}\sum_{k=1}^{p-1}\l(\f2{k^2}-\f{3H_k(1)}{k}\r)\binom{2k}{k}\eq \f{2H_{p-1}(1)}{p}\pmod{p^2}.
\end{gather}
\end{lemma}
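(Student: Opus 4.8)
The plan is to derive all three congruences from the classical harmonic-sum congruences \eqref{harmonic1} and \eqref{harmonic2} after first stripping the central binomial coefficient of its arithmetic content. The starting point is the exact identity $\binom{2k}{k}=(-4)^k\binom{-1/2}{k}$ together with $-\frac12\equiv m\pmod p$, where $m=(p-1)/2$. Since all integers appearing in the relevant factorials are less than $p$, this yields $\binom{2k}{k}\equiv(-4)^k\binom{m}{k}\pmod p$ for $1\le k\le m$, while $\binom{2k}{k}\equiv0\pmod p$ for $m<k\le p-1$ (the numerator $(2k)!$ carries exactly one factor of $p$). Writing the \emph{exact} relation $-\frac12=m-\frac p2$ and expanding the product defining $\binom{-1/2}{k}$ to first order in $p$ gives the modulo-$p^2$ refinement $\binom{2k}{k}\equiv(-4)^k\binom{m}{k}(1+p\,O_k)\pmod{p^2}$ on the lower half, where $O_k=\sum_{j=1}^k\frac1{2j-1}$ arises from $\sum_{j=0}^{k-1}\frac1{m-j}\equiv-2O_k\pmod p$; on the upper half the single factor $j=m$ in that product produces the lone power of $p$, so that $\binom{2k}{k}\equiv p\cdot(-4)^k u_k\pmod{p^2}$ for a $p$-adic unit $u_k$.

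For \eqref{mainth1lem1eq1} and \eqref{mainth1lem1eq2}, which are congruences modulo $p$, only the range $1\le k\le m$ survives. After substituting $\binom{2k}{k}\equiv(-4)^k\binom{m}{k}$ I would evaluate the resulting finite sums by standard central-binomial sum manipulations (finite-difference identities for $\sum(-4)^k\binom{m}{k}/k^{s}$ and interchange of summation), reducing them to classical power-sum congruences, and then express the outcome as a rational multiple of $H_{p-1}(1)/p^2$; recall that \eqref{harmonic1} with $s=1$ gives $H_{p-1}(1)\equiv-\frac13p^2B_{p-3}\pmod{p^3}$. For \eqref{mainth1lem1eq2} I would additionally write $H_k(2)=H_{k-1}(2)+1/k^2$, so that one piece reproduces the sum in \eqref{mainth1lem1eq1} and the other becomes a double sum treated by swapping the order of summation.

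Congruence \eqref{mainth1lem1eq3} is the genuinely delicate one, and I expect the bookkeeping here to be the main obstacle. Its right-hand side $2H_{p-1}(1)/p$ is $\equiv0\pmod p$ by Wolstenholme's theorem, which forces the modulo-$p$ contribution of the lower half to cancel: the weights $2/k^2$ and $-3H_k(1)/k$ are arranged precisely for this, and the content lives at the level of $p^2$. I must therefore keep the upper half $m<k\le p-1$, where each term contributes a multiple of $p$ via $\binom{2k}{k}\equiv p\cdot(-4)^k u_k\pmod{p^2}$, and simultaneously expand the lower-half terms to second order in $p$ using the correction $1+p\,O_k$. The hard part will be controlling the odd-harmonic factor $O_k$ and reducing the surviving second-order harmonic sums to $B_{p-3}$ through \eqref{harmonic1} and \eqref{harmonic2}, together with \eqref{suntauraso} at $r=1$ for the base sum $\sum_{k=1}^{p-1}\binom{2k}{k}/k$. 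Once the lower- and upper-half contributions are assembled, everything should consolidate into the single term $2H_{p-1}(1)/p$ modulo $p^2$.
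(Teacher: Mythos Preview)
The paper does not actually prove this lemma: it simply cites the literature. Congruence \eqref{mainth1lem1eq1} was a conjecture of Sun settled by Kh.\ Hessami Pilehrood and T.\ Hessami Pilehrood \cite{Hessami2012}, and \eqref{mainth1lem1eq2}--\eqref{mainth1lem1eq3} are quoted from Tauraso \cite{Tauraso2018} (with $p=5$ checked by hand). So your approach---a direct computational proof via $\binom{2k}{k}\equiv(-4)^k\binom{m}{k}\pmod p$---is a genuinely different route.

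That said, what you have written is a strategy outline, not a proof, and the gaps are exactly where the real difficulty lies. For \eqref{mainth1lem1eq1} you write that after the substitution you ``would evaluate the resulting finite sums by standard central-binomial sum manipulations'': but the sum $\sum_{k=1}^{m}(-4)^k\binom{m}{k}/k^3\pmod p$ is \emph{not} routine. This congruence was open for some time after Sun conjectured it, and the proof in \cite{Hessami2012} goes through a WZ-type identity and a careful analysis of Ap\'ery-like series, not through finite differences on $\binom{m}{k}$. Your reduction for \eqref{mainth1lem1eq2} leaves behind the double sum $\sum_{j}\frac1{j^2}\sum_{k>j}\binom{2k}{k}/k$, whose inner partial sums are no easier than what you started with. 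For \eqref{mainth1lem1eq3} you correctly identify that the work lives at order $p^2$ and that both the upper half and the $O_k$-correction on the lower half must be tracked, but you then defer everything to ``controlling the odd-harmonic factor $O_k$'' and ``everything should consolidate''; none of that bookkeeping is carried out, and it is substantial (this is precisely the content of the relevant sections of \cite{Tauraso2018}).

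In short: the reductions you describe are sound first moves, but the phrases ``standard manipulations'', ``I would evaluate'', and ``should consolidate'' are standing in for the entire argument. If you want a self-contained proof rather than citations, you will need to actually perform those evaluations---and you should expect them to require WZ pairs or partial-fraction/telescoping identities of the type developed in \cite{Hessami2012,Tauraso2018}, not merely Taylor expansion of $\binom{-1/2}{k}$.
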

\begin{proof}
\eqref{mainth1lem1eq1} was originally conjectured by Sun \cite[Conjecture 1.1]{Sun2011} and confirmed by Kh. Hessami Pilehrood and T. Hessami Pilehrood \cite{Hessami2012}. One may consult \cite[Conjecture 1.1]{Sun2011} for the modulus $p^4$ case of \eqref{mainth1lem1eq1}. We can directly verify \eqref{mainth1lem1eq2} and \eqref{mainth1lem1eq3} for $p=5$. By \cite{Tauraso2018} we know these two congruences hold for $p>5$.
\end{proof}

\begin{lemma}\label{mainth1lem2}
For any prime $p>3$ we have
$$
\sum_{k=1}^{p-1}\l(\f{3H_k(1)^2}{k}-\f{4H_k(1)}{k^2}\r)\binom{2k}{k}\eq\f{6H_{p-1}(1)}{p^2}\pmod{p}.
$$
\end{lemma}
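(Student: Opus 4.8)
The plan is to peel off from the target the one sum that Lemma \ref{mainth1lem1} already evaluates, and then to reduce the remaining, genuinely new pieces to multiple harmonic sums. Writing $H_k(1)=H_{k-1}(1)+1/k$ and expanding the square, one has for every $k\ge1$ the pointwise identity
$$
\f{3H_k(1)^2}{k}-\f{4H_k(1)}{k^2}=\f{3H_{k-1}(1)^2}{k}+\f{2H_{k-1}(1)}{k^2}-\f1{k^3}.
$$
Summing against $\binom{2k}{k}$ over $1\le k\le p-1$ and invoking \eqref{mainth1lem1eq1} to handle the last term, namely $\sum_{k=1}^{p-1}k^{-3}\binom{2k}{k}\eq-2H_{p-1}(1)/p^2\pmod p$, the lemma is reduced to the single congruence
$$
3\sum_{k=1}^{p-1}\f{H_{k-1}(1)^2}{k}\binom{2k}{k}+2\sum_{k=1}^{p-1}\f{H_{k-1}(1)}{k^2}\binom{2k}{k}\eq\f{4H_{p-1}(1)}{p^2}\pmod p.
$$

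To evaluate these two companion sums modulo $p$ I would use the elementary congruence $\binom{2k}{k}\eq(-4)^k\binom{(p-1)/2}{k}\pmod p$ valid for $0\le k\le(p-1)/2$, together with the fact that $\binom{2k}{k}\eq0\pmod p$ for $(p-1)/2<k\le p-1$. This collapses each sum to the range $1\le k\le(p-1)/2$ and turns it into a combination of multiple harmonic sums over $\{1,\ldots,(p-1)/2\}$, which one then expresses through sums over $\{1,\ldots,p-1\}$ via the usual relations between $H_{(p-1)/2}(\cdot)$ and $H_{p-1}(\cdot)$ and finally reduces to $B_{p-3}$ by means of \eqref{harmonic1} and \eqref{harmonic2}. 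Since $H_{p-1}(1)/p^2\eq-\f13B_{p-3}\pmod p$ by \eqref{harmonic1}, every term in sight is an explicit rational multiple of $B_{p-3}$, and it is this bookkeeping that one matches against the right-hand side. In the same spirit as the proof of Lemma \ref{mainth1lem1}, one may alternatively note that both companion sums are of the type treated systematically by Tauraso \cite{Tauraso2018}, invoke those evaluations for $p>5$, and verify the single case $p=5$ by direct computation.

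The main obstacle is precisely the step beyond Lemma \ref{mainth1lem1}: the target carries $H_k(1)^2$, so it is of one higher harmonic degree than any of \eqref{mainth1lem1eq1}--\eqref{mainth1lem1eq3}, and no purely algebraic combination of those three congruences can produce it. Thus the crux is the independent evaluation modulo $p$ of a genuinely new sum such as $\sum_{k=1}^{p-1}H_{k-1}(1)^2\binom{2k}{k}/k$; once this congruence is secured, the remaining computation is routine manipulation with the harmonic-sum identities recorded above.
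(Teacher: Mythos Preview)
Your opening reduction is sound: the pointwise identity
$\frac{3H_k(1)^2}{k}-\frac{4H_k(1)}{k^2}=\frac{3H_{k-1}(1)^2}{k}+\frac{2H_{k-1}(1)}{k^2}-\frac{1}{k^3}$
holds, and together with \eqref{mainth1lem1eq1} it does reduce the lemma to the single congruence you display. But from that point on the proposal is only a plan, not a proof. The substitution $\binom{2k}{k}\equiv(-4)^k\binom{(p-1)/2}{k}$ merely truncates the range to $k\le(p-1)/2$; it does \emph{not} turn $\sum_k H_{k-1}(1)^2\binom{2k}{k}/k$ into a combination of multiple harmonic sums, because the weight $(-4)^k\binom{(p-1)/2}{k}$ is still present and you give no mechanism for removing it. Your own closing paragraph concedes the point---``once this congruence is secured, the remaining computation is routine''---but that congruence \emph{is} the lemma. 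The fallback of citing \cite{Tauraso2018} would require a precise reference to an $H_k(1)^2$-weighted evaluation; the fact that the paper derives the relation from scratch rather than quoting it suggests no such entry is available there.

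The paper supplies exactly the missing engine, and it is different from either route you sketch. It takes the WZ pair $F(n,k)=\frac{1}{k}\binom{n+k}{k}$, $G(n,k)=\frac{k}{(n+1)^2}\binom{n+k}{k}$ from \cite{Tauraso2018} and applies it to the \emph{weighted} sum $S_n=\sum_{k=1}^{n}F(n,k)H_k(1)^2$. Computing $S_{n+1}-S_n$ via the WZ relation and an Abel summation in $k$ (which generates the lower-weight terms $G(n,k)H_{k-1}(1)/k$ and $G(n,k)/k^2$, both evaluable in closed form), and then summing over $0\le n\le p-2$, produces an exact identity for $S_{p-1}$ in which the combination $\frac32\sum_n\binom{2n}{n}H_n(1)^2/n-2\sum_n\binom{2n}{n}H_n(1)/n^2$ appears alongside sums already handled by Lemma~\ref{mainth1lem1} and by \eqref{harmonic1}--\eqref{harmonic2}. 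Since $F(p-1,k)\equiv0\pmod p$ for $1\le k\le p-1$, one has $S_{p-1}\equiv0\pmod p$, and the desired congruence follows. The key idea absent from your proposal is this WZ-with-harmonic-weight device; without it, the crucial $H_k(1)^2$ sum is never actually evaluated.
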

\begin{proof}
As in \cite{Tauraso2018}, consider the WZ pair
$$
F(n,k)=\f1{k}\binom{n+k}{k}\quad \t{and}\quad G(n,k)=\f{k}{(n+1)^2}\binom{n+k}{k}.
$$
Then for any $n,k\in\N$ we have
$$
F(n+1,k)-F(n,k)=G(n,k+1)-G(n,k).
$$
Let $S_n=\sum_{k=1}^nF(n,k)H_k(1)^2$. Then
\begin{align}\label{deltaS}
S_{n+1}-S_n=&\sum_{k=1}^{n+1}F(n+1,k)H_k(1)^2-\sum_{k=1}^nF(n,k)H_k(1)^2\notag\\
=&F(n+1,n+1)H_{n+1}(1)^2+\sum_{k=1}^n(F(n+1,k)-F(n,k))H_k(1)^2\notag\\
=&F(n+1,n+1)H_{n+1}(1)^2+\sum_{k=1}^n(G(n,k+1)-G(n,k))H_k(1)^2\notag\\
=&F(n+1,n+1)H_{n+1}(1)^2+\sum_{k=1}^n\bigg(G(n,k+1)H_k(1)^2-G(n,k)H_{k-1}^2\notag\\
&-\f{2G(n,k)H_{k-1}}{k}-\f{G(n,k)}{k^2}\bigg)\notag\\
=&\f{\binom{2n+2}{n+1}H_{n+1}(1)^2}{n+1}+\f{\binom{2n+2}{n+1}H_{n}(1)^2}{2n+2}-2\sum_{k=1}^n\f{G(n,k)H_k(1)}{k}+\sum_{k=1}^n\f{G(n,k)}{k^2}.
\end{align}
By \cite[(16)]{Tauraso2018} we have
\begin{equation}\label{pf1eq1}
\sum_{k=1}^n\f{G(n,k)}{k^2}=\f{1}{(n+1)^2}\sum_{k=1}^n\f1{k}\binom{n+k}{k}=\f{1}{(n+1)^2}\l(\f32\sum_{k=1}^n\f{\binom{2k}{k}}{k}-H_{n}(1)\r).
\end{equation}
From \cite[(1.49)]{G} we know that
$$
\sum_{k=0}^n\binom{x+k}{k}=\binom{x+n+1}{n}.
$$
Therefore,
\begin{align}\label{pf1eq2}
\sum_{k=1}^n\f{G(n,k)H_k(1)}{k}=&\f{1}{(n+1)^2}\sum_{k=1}^n\binom{n+k}{k}\sum_{j=1}^k\f1{j}=\f{1}{(n+1)^2}\sum_{j=1}^n\f1{j}\sum_{k=j}^{n}\binom{n+k}{k}\notag\\
=&\f{1}{(n+1)^2}\sum_{j=1}^n\f1{j}\l(\binom{2n+1}{n}-\binom{n+j}{j-1}\r)\notag\\
=&\f{1}{(n+1)^2}\l(\f12\binom{2n+2}{n+1}H_n(1)-\f{1}{n+1}\sum_{j=1}^n\binom{n+j}{j}\r)\notag\\
=&\f1{2(n+1)^2}\binom{2n+2}{n+1}H_n(1)-\f{1}{2(n+1)^3}\binom{2n+2}{n+1}+\f1{(n+1)^3}.
\end{align}
Substituting \eqref{pf1eq1} and \eqref{pf1eq2} into \eqref{deltaS} we have
\begin{align*}
S_{n+1}-S_n=&\f{3\binom{2n+2}{n+1}H_{n+1}(1)^2}{2n+2}-\f{2\binom{2n+2}{n+1}H_{n+1}(1)}{(n+1)^2}+\f{5\binom{2n+2}{n+1}}{2(n+1)^3}-\f{H_{n+1}(1)}{(n+1)^2}\\
&-\f{1}{(n+1)^3}+\f{3}{2(n+1)^2}\sum_{k=1}^n\f{\binom{2k}{k}}{k}.
\end{align*}
Now summing both sides over $n$ from $0$ to $p-2$ and noting that $S_0=0$ we have
\begin{align}\label{pf1eq3}
S_{p-1}=&\f32\sum_{n=1}^{p-1}\f{\binom{2n}{n}H_n(1)^2}{n}-2\sum_{n=1}^{p-1}\f{\binom{2n}{n}H_n(1)}{n^2}+\f52\sum_{n=1}^{p-1}\f{\binom{2n}{n}}{n^3}-\sum_{k=1}^{p-1}\f{H_n(1)}{n^2}-H_{p-1}(3)\notag\\
&+\f32\sum_{n=1}^{p-1}\f1{n^2}\sum_{k=1}^{n-1}\f{\binom{2k}{k}}{k}.
\end{align}
Clearly, for $1\leq k\leq p-1$,
$$
F(p-1,k)=\f1{k}\binom{p-1+k}{k}\eq\f1{k}\binom{k-1}{k}=0\pmod{p}.
$$
Thus we have
\begin{equation}\label{pf1eq4}
S_{p-1}=\sum_{k=1}^{p-1}F(p-1,k)H_k(1)^2\eq0\pmod{p}.
\end{equation}
In view of \eqref{harmonic1} and \eqref{harmonic2} we have
\begin{equation}\label{pf1eq5}
\sum_{k=1}^{p-1}\f{H_n(1)}{n^2}\eq H_{p-1}(1,2)\eq-\f{3H_{p-1}}{p^2}\pmod{p}.
\end{equation}
Note that
\begin{align}\label{pf1eq6}
\sum_{n=1}^{p-1}\f1{n^2}\sum_{k=1}^{n-1}\f{\binom{2k}{k}}{k}=\sum_{k=1}^{p-1}\f{\binom{2k}{k}}{k}\sum_{n=k}^{p-1}\f{1}{n^2}-\sum_{n=1}^{p-1}\f{\binom{2n}{n}}{n^3}\eq-\sum_{k=1}^{p-1}\f{\binom{2k}{k}H_{k}(2)}{k}\pmod{p}.
\end{align}
Substituting \eqref{pf1eq4}--\eqref{pf1eq6} into \eqref{pf1eq3} and using \eqref{mainth1lem1eq1} and \eqref{mainth1lem1eq2} we immediately obtain the desired result.
\end{proof}

\begin{lemma}\label{harmonickey}
Let $p>3$ be a prime. Then
\begin{gather}
\label{harmonickeyeq1}\sum_{k=1}^{(p-1)/2}\f{1}{(2k-1)^2}\eq\f1{12}pB_{p-3}\pmod{p^2},\\
\label{harmonickeyeq2}\sum_{k=1}^{(p-1)/2}\f{H_{2k-2}(1)}{(2k-1)^2}\eq\f38B_{p-3}\pmod{p},\\
\label{harmonickeyeq3}\sum_{k=1}^{(p-1)/2}\f{H_{k-1}}{(2k-1)^2}\eq\f78B_{p-3}\pmod{p}.
\end{gather}
\end{lemma}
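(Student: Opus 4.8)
The plan is to treat the three congruences separately and to reduce each to the harmonic-sum congruences \eqref{harmonic1}, \eqref{harmonic2} together with the standard mod-$p$ evaluations of multiple harmonic sums recorded in \cite{Tauraso2018}.

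For \eqref{harmonickeyeq1} I would first split off the even-indexed terms. Since $\{2k-1\colon 1\le k\le(p-1)/2\}=\{1,3,\dots,p-2\}$, one has
\[
\sum_{k=1}^{(p-1)/2}\frac{1}{(2k-1)^2}=H_{p-1}(2)-\frac14 H_{(p-1)/2}(2).
\]
By \eqref{harmonic1} the first term is $\frac23 pB_{p-3}\pmod{p^2}$. For the second I would apply the reflection $k\mapsto p-k$ inside $H_{p-1}(2)$: expanding $(p-k)^{-2}\equiv k^{-2}+2pk^{-3}\pmod{p^2}$ and pairing gives $H_{p-1}(2)\equiv 2H_{(p-1)/2}(2)+2pH_{(p-1)/2}(3)\pmod{p^2}$. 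Combining this with the known value $H_{(p-1)/2}(3)\equiv -2B_{p-3}\pmod p$ yields $H_{(p-1)/2}(2)\equiv \frac73 pB_{p-3}\pmod{p^2}$, and substituting back gives $\bigl(\frac23-\frac{7}{12}\bigr)pB_{p-3}=\frac{1}{12}pB_{p-3}$, which is \eqref{harmonickeyeq1}.

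For \eqref{harmonickeyeq2} and \eqref{harmonickeyeq3}, where only the modulus $p$ is needed, I would first cut down the number of independent sums using the splitting $H_{2k-2}(1)=\frac12 H_{k-1}(1)+\sum_{j=1}^{k-1}\frac{1}{2j-1}$, which ties \eqref{harmonickeyeq2} to \eqref{harmonickeyeq3} plus the purely odd double sum $\sum_{k}(2k-1)^{-2}\sum_{j<k}(2j-1)^{-1}$. To evaluate the remaining sums I would use the reflection $2k-1\mapsto p-(2k-1)$, i.e. $k\mapsto \frac{p+1}2-k$, which replaces the weight $(2k-1)^{-2}$ by $4m^{-2}\pmod p$ with $m=\frac{p+1}2-k$ and converts the nested harmonic factor into a truncated sum running up to $(p-1)/2-m$. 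In this way both \eqref{harmonickeyeq2} and \eqref{harmonickeyeq3} become linear combinations of restricted double sums of the shapes $\sum_{i+m\le (p-1)/2}m^{-2}i^{-1}$ and $\sum_{j+m\le(p-1)/2}m^{-2}(2j-1)^{-1}$, together with $H_{(p-1)/2}(2)$, all of whose mod-$p$ values in terms of $B_{p-3}$ follow from \eqref{harmonic1}, \eqref{harmonic2} and \cite{Tauraso2018} (the Fermat-quotient contributions cancelling); solving the resulting linear relations pins down the coefficients $\frac38$ and $\frac78$. As a check on the smallest prime I would verify \eqref{harmonickeyeq2} and \eqref{harmonickeyeq3} directly for $p=5$, exactly as is done for \eqref{mainth1lem1eq2} and \eqref{mainth1lem1eq3} in Lemma \ref{mainth1lem1}.

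The main obstacle is precisely the mod-$p$ evaluation of the mixed-parity double sums underlying \eqref{harmonickeyeq2} and \eqref{harmonickeyeq3}: in contrast to \eqref{harmonickeyeq1}, these couple an odd-index weight $(2k-1)^{-2}$ with a harmonic number running over a full or over an odd range, and keeping accurate track of the boundary terms generated by the reflection while matching everything to known multiple-harmonic-sum congruences is the delicate point where the exact rational constants $\frac38$ and $\frac78$ are determined. An alternative and possibly cleaner route, which I would fall back on if the reflection bookkeeping became unwieldy, is to imitate the creative-telescoping argument of Lemma \ref{mainth1lem2} with a half-integer-shifted WZ pair designed to produce the $(2k-1)^{-2}$ denominators directly.
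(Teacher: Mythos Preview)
Your treatment of \eqref{harmonickeyeq1} is essentially the paper's: the same decomposition $H_{p-1}(2)-\tfrac14H_{(p-1)/2}(2)$, with the same inputs (the paper simply cites Sun for $H_{(p-1)/2}(2)\equiv\tfrac73pB_{p-3}$, while you rederive it by the reflection $k\mapsto p-k$).

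For \eqref{harmonickeyeq2} and \eqref{harmonickeyeq3} your reflection $k\mapsto\tfrac{p+1}{2}-k$ is also what the paper uses, but the paper avoids your odd/even splitting of $H_{2k-2}(1)$ and the restricted double sums $\sum_{i+m\le(p-1)/2}m^{-2}i^{-1}$ entirely. The missing simplification is the symmetry $H_{p-1-j}(1)\equiv H_j(1)\pmod p$: applying it after the reflection turns the numerator $H_{2k-2}(1)$ directly into $H_{2m}(1)$ (not a truncated sum up to $(p-1)/2-m$), so \eqref{harmonickeyeq2} becomes in one line
\[
\sum_{k=1}^{(p-1)/2}\frac{H_{2k-2}(1)}{(2k-1)^2}\equiv\frac14\sum_{m=1}^{(p-1)/2}\frac{H_{2m}(1)}{m^2}\pmod p,
\]
and the right-hand side is evaluated as $\tfrac32B_{p-3}$ by a result of Mao--Wang \cite[Lemma~2.4]{MaoWang2019}, not by \cite{Tauraso2018}. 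The same reflection, combined with the elementary identity $H_{(p-1)/2-m}(1)\equiv H_{(p-1)/2}(1)+2H_{2m}(1)-H_m(1)\pmod p$, reduces \eqref{harmonickeyeq3} to the Mao--Wang sum together with $H_{(p-1)/2}(3)$ and $H_{(p-1)/2}(1,2)$, whose values come from \cite{Hessami2014}. So the paper's argument is short and explicit, whereas your route through mixed-parity double sums is workable in principle but, as you yourself note, leaves the bookkeeping (and the precise references needed) unresolved; the WZ fallback you mention is unnecessary once the $H_{p-1-j}\equiv H_j$ trick is used.
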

\begin{proof}
By \cite[Corollaries 5.1 and 5.2]{SunZH2000} we have
$$
H_{p-1}(2)\eq \f23pB_{p-3}\pmod{p^2}\quad\t{and}\quad H_{(p-1)/2}(2)\eq \f73pB_{p-3}\pmod{p^2}.
$$
It follows that
$$
\sum_{k=1}^{(p-1)/2}\f{1}{(2k-1)^2}=H_{p-1}(2)-\f14H_{(p-1)/2}(2)\eq \f1{12}pB_{p-3}\pmod{p^2}.
$$
This proves \eqref{harmonickeyeq1}.

Clearly,
\begin{align*}
\sum_{k=1}^{(p-1)/2}\f{H_{2k-2}(1)}{(2k-1)^2}=\sum_{k=1}^{(p-1)/2}\f{H_{2((p+1)/2-k)-2}(1)}{(2((p+1)/2-k)-1)^2}=\f{1}{4}\sum_{k=1}^{(p-1)/2}\f{H_{2k}}{k^2}\pmod{p},
\end{align*}
where we used the fact $H_{p-1-k}(1)\eq H_{k}(1)\pmod{p}$ for any $0\geq k\geq p-1$. In view of \cite[Lemma 2.4]{MaoWang2019}, we have
\begin{equation}\label{maowangres}
\sum_{k=1}^{(p-1)/2}\f{H_{2k}(1)}{k^2}\eq\f32B_{p-3}\pmod{p}.
\end{equation}
This proves \eqref{harmonickeyeq2}.

Note that
$$
H_{(p-1)/2-k}(1)=H_{(p-1)/2}(1)-\sum_{j=1}^k\f{1}{(p+1)/2-j}\eq H_{(p-1)/2}(1)+2H_{2k}(1)-H_k(1)\pmod{p}.
$$
Therefore,
\begin{align*}
&\sum_{k=1}^{(p-1)/2}\f{H_{k-1}(1)}{(2k-1)^2}=\sum_{k=1}^{(p-1)/2}\f{H_{(p-1)/2-k}(1)}{(p-2k)^2}\\
\eq&\f14\sum_{k=1}^{(p-1)/2}\f{H_{(p-1)/2}(1)+2H_{2k}(1)-H_k(1)}{k^2}\\
\eq&\f12\sum_{k=1}^{(p-1)/2}\f{H_{2k}(1)}{k^2}-\f14H_{(p-1)/2}(3)-\f14H_{(p-1)/2}(1,2)\pmod{p}.
\end{align*}
It is known (cf. \cite{Hessami2014}) that
$$
H_{(p-1)/2}(3)\eq-2B_{p-3}\pmod{p}\quad\t{and}\quad H_{(p-1)/2}(1,2)\eq \f32B_{p-3}\pmod{p}.
$$
Then \eqref{harmonickeyeq3} follows from the above two congruences and \eqref{maowangres} immediately.
\end{proof}

\medskip

\noindent{\it Proof of Theorem \ref{mainth1}}. By \eqref{sec2thkey3} and \eqref{sec2thkey4} we have
\begin{align*}
&\f{1}{p^r}\sum_{k=0}^{p^r-1}(3k+1)\f{(\f12)_k^3}{(1)_k^3}4^k\\
\eq&1+\f14p^2\l(3\sum_{n=1}^{p-1}\f{\binom{2n}{n}H_n(1)}{n}-2\sum_{n=1}^{p-1}\f{\binom{2n}{n}}{n^2}\r)+\f18p^3\l(3\sum_{n=1}^{p-1}\f{\binom{2n}{n}H_n(1)^2}{n}-4\sum_{n=1}^{p-1}\f{\binom{2n}{n}H_n(1)}{n^2}\r)\\
&+\f{3}{4}p\sum_{n=1}^{p-1}\f{\binom{2n}{n}}{n}+\f5{16}p^3\sum_{n=1}^{p-1}\f{\binom{2n}{n}}{n^3}-\f3{16}p^3\sum_{n=1}^{p-1}\f{\binom{2n}{n}H_n(2)}{n}-2p^2\sum_{k=1}^{(p-1)/2}\f{1}{(2k-1)^2}\\
&-8p^3\sum_{k=1}^{(p-1)/2}\f{H_{2k-2}(1)}{(2k-1)^2}+4p^3\sum_{k=1}^{(p-1)/2}\f{H_{k-1}(1)}{(2k-1)^2}\pmod{p^4}.
\end{align*}
Then by \eqref{harmonic1}, \eqref{suntauraso} and Lemmas \ref{mainth1lem1}--\ref{harmonickey} we have
$$
\f{1}{p^r}\sum_{n=0}^{p^r-1}(3n+1)\f{(\f12)_n^3}{(1)_n^3}4^n\eq1+\f76p^3B_{p-3}\pmod{p^4}.
$$

The proof of Theorem \ref{mainth1} is now complete.\qed

\section{Proof of Theorem \ref{mainth2}}
\setcounter{lemma}{0} \setcounter{theorem}{0}
\setcounter{equation}{0}\setcounter{proposition}{0}
\setcounter{Rem}{0}\setcounter{conjecture}{0}

Similarly as in Section 2, we first establish the following result.

\begin{theorem}\label{sec3th}
For any odd prime $p$ and positive integer $r$ we have
$$
\f{1}{p^r}\sum_{k=0}^{(p^r-1)/2}(4k+1)\f{(\f12)_k^4}{(1)_k^4}\eq \f{1}{p}\sum_{k=0}^{(p-1)/2}(4k+1)\f{(\f12)_k^4}{(1)_k^4}\pmod{p^4}.
$$
\end{theorem}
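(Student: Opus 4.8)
The plan is to follow the proof of Theorem \ref{sec2th} almost verbatim, replacing the ${}_3F_2$ WZ pair by its ${}_4F_3$ analogue. First I would invoke Guo's congruence quoted just after Theorem \ref{mainth2},
$$
\sum_{k=0}^{(p^r-1)/2}(4k+1)\f{(\f12)_k^4}{(1)_k^4}\eq\sum_{k=0}^{p^r-1}(4k+1)\f{(\f12)_k^4}{(1)_k^4}\pmod{p^{r+4}},
$$
both for general $r$ and for $r=1$; dividing by $p^r$ and $p$ respectively, it suffices to prove the stated reduction for the sums truncated at $p^r-1$. I would then take
$$
F(n,k)=(4n+2k+1)\f{(\f12)_n^2(\f12+k)_n^2}{(1)_n^4},
$$
and let $G(n,k)$ be its accompanying WZ function (essentially $(\f12)_n^2(\f12+k)_{n-1}^2$ times a rational function of $n,k$), so that $F(n,k-1)-F(n,k)=G(n+1,k)-G(n,k)$, with $F(n,0)=(4n+1)(\f12)_n^4/(1)_n^4$ and $G(0,k)=0$. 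Telescoping in $k$ from $1$ to $(p^r-1)/2$ and then in $n$ from $0$ to $p^r-1$ gives
$$
\sum_{n=0}^{p^r-1}F(n,0)=\sum_{n=0}^{p^r-1}F\l(n,\f{p^r-1}2\r)+\sum_{k=1}^{(p^r-1)/2}G(p^r,k),
$$
which is exactly the split \eqref{sec2thkey1}--\eqref{sec2thkey2}; it remains to show that each of the two sums on the right, divided by $p^r$, is congruent mod $p^4$ to its value at $r=1$.

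For the boundary sum I would use $2k+1=p^r$ and $\f12+k=\f{p^r}2$ at $k=(p^r-1)/2$ to get, for $n\gs1$,
$$
F\l(n,\f{p^r-1}2\r)=(4n+p^r)\f{\binom{2n}{n}^2}{16^n}\cdot\f{p^{2r}}{4n^2}\cdot\f{(1+\f{p^r}2)_{n-1}^2}{(1)_{n-1}^2},
$$
with $F(0,(p^r-1)/2)=p^r$. Dividing by $p^r$ produces sums of the shape $p^r\sum\f{\binom{2n}{n}^2}{16^n\,n}(\cdots)$ and $p^{2r}\sum\f{\binom{2n}{n}^2}{16^n\,n^2}(\cdots)$, where $(\cdots)$ is the square of $(1+\f{p^r}2)_{n-1}/(1)_{n-1}$; I would expand this ratio modulo $p^3$ as $1+\f{p^r}2H_{n-1}(1)+\f{p^{2r}}8H_{n-1}(1)^2-\f{p^{2r}}8H_{n-1}(2)+\cdots$, using $\ord_p(H_{n-1}(1,\ldots,1))\gs-d(r-1)$ exactly as in the proof of Theorem \ref{sec2th}. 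To collapse $r$ to $1$ I need the Section 3 analogue of Lemma \ref{mainth1lem3} for the \emph{squared} central binomial coefficients, i.e.\ congruences of the shape $p^{cr}\sum_{n=1}^{p^r-1}\f{\binom{2n}{n}^2}{16^n\,n^s}H_{n-1}(\cdots)\eq p^{c}\sum_{n=1}^{p-1}\f{\binom{2n}{n}^2}{16^n\,n^s}H_{n-1}(\cdots)\pmod{p^4}$, proved by the same induction on $r$ (splitting off the terms with $p\mid n$, using $\binom{2pn}{pn}\eq\binom{2n}{n}\pmod{p^2}$ and the order bounds), together with the analogue of \eqref{suntauraso} for $p^{r-1}\sum\f{\binom{2n}{n}^2}{16^n\,n}$ modulo $p^3$.

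For the $G$-sum I would proceed exactly as in the derivation of \eqref{sec2thkey4}: factor $G(p^r,k)$ as $\binom{2p^r}{p^r}^4$ (carrying the requisite power of $p$) times $\sum_{k=1}^{(p^r-1)/2}\f1{(2k-1)^2}\big((\f12+p^r)_{k-1}/(\f12)_{k-1}\big)^2$, expand $(\f12+p^r)_{k-1}/(\f12)_{k-1}\eq1+2p^rH_{2k-2}(1)-p^rH_{k-1}(1)\pmod{p^2}$, and use $\sum_{k=1}^{(p-1)/2}(2k-1)^{-2}\eq0\pmod p$. The resulting odd-denominator sums are handled verbatim by the last three congruences of Lemma \ref{mainth1lem3}, while the central binomial factor is collapsed by the Kazandzidis congruence and Fermat's little theorem; hence the $G$-contribution, divided by $p^r$, is independent of $r$ modulo $p^4$. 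Combining the two pieces yields the claimed reduction.

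The hard part is not the telescoping but the squared-binomial reduction lemma. Pushing the $r$-collapse of $\sum\f{\binom{2n}{n}^2}{16^n\,n^s}H_{n-1}(\cdots)$ to modulus $p^4$ forces me to pin down the exact $p$-adic order of every multiple harmonic sum $H_{n-1}(\cdots)$ occurring for $1\ls n\ls p^r-1$, and to check that the Pochhammer expansion, the Kazandzidis step applied to $\binom{2p^r}{p^r}^4$, and each auxiliary congruence retain enough precision. The bookkeeping---enumerating the harmonic sums that appear and matching each to an available evaluation---is the delicate and laborious step. Once this reduction is established, Theorem \ref{sec3th} follows; the explicit value $p^r+\f76p^{r+3}B_{p-3}$ of Theorem \ref{mainth2} then drops out by feeding the resulting $r$-independent expressions into \eqref{harmonic1}, Lemma \ref{harmonickey} and the analogue of \eqref{suntauraso}, just as Theorem \ref{mainth1} was derived at the end of Section 2.
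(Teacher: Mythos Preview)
Your plan diverges from the paper's proof in an essential way. The paper does \emph{not} carry the Section~2 template over with an $F(n,k)=(4n+2k+1)\frac{(\frac12)_n^2(\frac12+k)_n^2}{(1)_n^4}$ pair. Instead it uses the pair from \cite{WangSD2018},
\[
F(n,k)=(-1)^k(4n+1)\frac{(\tfrac12)_n^3(\tfrac12)_{n+k}}{(1)_n^3(1)_{n-k}(\tfrac12)_k^2},\qquad
G(n,k)=(-1)^{k-1}\frac{4(\tfrac12)_n^3(\tfrac12)_{n+k-1}}{(1)_{n-1}^3(1)_{n-k}(\tfrac12)_k^2},
\]
which satisfies the \emph{weighted} relation $(2k-1)F(n,k-1)-2kF(n,k)=G(n+1,k)-G(n,k)$ and, crucially, $F(n,k)=0$ for $k>n$. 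Telescoping with $m=(p^r-1)/2$ directly (no detour through Guo's congruence) gives
\[
\sum_{n=0}^{m}F(n,0)=\frac{4^m}{\binom{2m}{m}}F(m,m)+\sum_{k=1}^m\frac{4^{k-1}G(m+1,k)}{(2k-1)\binom{2k-2}{k-1}}.
\]
The $F$-boundary is therefore a \emph{single} term, which the paper evaluates exactly as $\binom{\frac12p^r-1}{p^r-1}\binom{2p^r-1}{p^r-1}$ and collapses to $r=1$ via the elementary Lemma~\ref{mainth2lem1} on $p^{jr}H_{p^r-1}(1,\ldots,1)$. The $G$-sum reduces to $p^{3r}\sum_{k\le (p^r-3)/2}16^k/\bigl((2k+1)^3\binom{2k}{k}^2\bigr)$ and is collapsed by Lemma~\ref{mainth2lem2} using $p$-adic Gamma functions and \eqref{key}. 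No analogue of Lemma~\ref{mainth1lem3} or of \eqref{suntauraso} for squared central binomials is ever needed.

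Your route, by contrast, runs into two genuine obstacles. First, the companion $G$ you describe for $F(n,k)=(4n+2k+1)\frac{(\frac12)_n^2(\frac12+k)_n^2}{(1)_n^4}$ is not one of the documented pairs for this series (the known ones carry $(\tfrac12-k)_n(\tfrac12+k)_n$ or the $(1)_{n-k}$ structure above), and a direct check shows the naive candidate $G(n,k)=c\,(\tfrac12)_n^2(\tfrac12+k)_{n-1}^2/(1)_{n-1}^4$ fails the WZ equation. Second, and more seriously, your $F$-boundary is a sum $p^{r}\sum_{n=1}^{p^r-1}\frac{\binom{2n}{n}^2}{16^n\,n}(1+\cdots)$, whose collapse to $r=1$ modulo $p^4$ demands an analogue of the Sun--Tauraso congruence \eqref{suntauraso} for $\binom{2k}{k}^2/(16^k k)$. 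Because of the factor $16^k$, the inductive engine of Lemma~\ref{mainth1lem3} (Kazandzidis plus splitting off $p\mid n$) no longer closes: $16^{pn}\not\equiv 16^n\pmod{p^2}$, so the replacement $\binom{2pn}{pn}^2/16^{pn}\to\binom{2n}{n}^2/16^n$ loses exactly the precision you need. That missing congruence is neither in the literature nor a routine extension, and it is precisely what the paper's choice of pair circumvents.
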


\begin{lemma}\label{mainth2lem1}
For any odd prime $p$ and positive integer $r$ we have
\begin{gather}
\label{mainth2lem1eq1}p^rH_{p^r-1}(1)\eq pH_{p-1}(1)\pmod{p^4},\\
\label{mainth2lem1eq2}p^{2r}H_{p^r-1}(1,1)\eq p^2H_{p-1}(1,1)\pmod{p^4},\\
\label{mainth2lem1eq3}p^{3r}H_{p^r-1}(1,1,1)\eq p^3H_{p-1}(1,1,1)\pmod{p^4}.
\end{gather}
\end{lemma}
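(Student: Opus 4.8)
The plan is to treat all three congruences uniformly and prove, for each fixed $d\in\{1,2,3\}$, the statement
$$
p^{dr}H_{p^r-1}(\underbrace{1,\ldots,1}_{d})\eq p^{d}H_{p-1}(\underbrace{1,\ldots,1}_{d})\pmod{p^4}
$$
by induction on $r$, the case $r=1$ being trivial. The starting observation is that $H_{p^r-1}(1^{d})$ is precisely the $d$-th elementary symmetric function of the numbers $1,\f12,\ldots,\f1{p^r-1}$. I would split the index set $\{1,\ldots,p^r-1\}$ into the units $U=\{n\colon p\nmid n\}$ and the multiples $\{p,2p,\ldots,(p^{r-1}-1)p\}$ of $p$. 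Writing $e_a^{(u)}=\sum_{i_1<\cdots<i_a,\ p\nmid i_j}\f1{i_1\cdots i_a}$ for the elementary symmetric function taken over the units only, the factorization of $\prod_{n=1}^{p^r-1}(1+x/n)$ into its unit part and its $p$-multiple part yields the convolution identity
$$
H_{p^r-1}(1^{d})=\sum_{a+b=d}e_a^{(u)}\cdot\f{H_{p^{r-1}-1}(1^{b})}{p^{b}},
$$
since each multiple $pm$ contributes $\f1{pm}$ and these reassemble into $p^{-b}H_{p^{r-1}-1}(1^{b})$.

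Multiplying by $p^{dr}$ turns the $a=0$ summand into $p^{d(r-1)}H_{p^{r-1}-1}(1^{d})$, which by the induction hypothesis is congruent to $p^{d}H_{p-1}(1^{d})$ modulo $p^4$; so everything reduces to showing that the terms with $a\gs1$ vanish modulo $p^4$. Each such term equals $(p^{ar}e_a^{(u)})\cdot(p^{b(r-1)}H_{p^{r-1}-1}(1^{b}))$, and I would control it with two elementary valuation facts. First, every $e_a^{(u)}$ is a $p$-adic integer, being a sum of products of reciprocals of units, and $p^{b(r-1)}H_{p^{r-1}-1}(1^{b})$ is likewise a $p$-adic integer because $\ord_p(H_{p^{r-1}-1}(1^{b}))\gs-b(r-2)$. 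Second, for $a=1$ the sum $e_1^{(u)}=\sum_{p\nmid n,\ 1\ls n\ls p^r-1}\f1n$ in fact satisfies $\ord_p(e_1^{(u)})\gs r$; this comes from the fixed-point-free involution $n\mapsto p^r-n$ on $U$, under which each pair contributes $\f1n+\f1{p^r-n}=\f{p^r}{n(p^r-n)}$, so the numerator supplies the factor $p^r$.

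With these bounds in hand, for $r\gs2$ the term with $a=1$ has $p$-adic valuation at least $2r\gs4$ (using $\ord_p(p^{r}e_1^{(u)})\gs 2r$ together with the integrality of the companion factor), while every term with $a\gs2$ has valuation at least $ar\gs 2r\gs4$; hence all $a\gs1$ terms are $\eq0\pmod{p^4}$, and the induction closes. I expect the main obstacle to be exactly the $a=1$ contribution: a crude estimate gives it valuation only $r+1$, which equals $3$ when $r=2$ and is too weak, so the whole argument hinges on the pairing refinement $\ord_p(e_1^{(u)})\gs r$ that supplies the missing power of $p$. A further advantage of this route is that, unlike a reduction via the Newton identities through $H_{p^r-1}(2)$ and $H_{p^r-1}(3)$, it never divides by $6$, and therefore handles $p=3$ on the same footing as all larger primes.
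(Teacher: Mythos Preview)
Your argument is correct. The convolution identity
$$
H_{p^r-1}(1^{d})=\sum_{a+b=d}e_a^{(u)}\cdot\f{H_{p^{r-1}-1}(1^{b})}{p^{b}}
$$
is exactly the coefficient identity coming from splitting $\prod_{n=1}^{p^r-1}(1+x/n)$ into its unit factor and its $p$-multiple factor, and after multiplying by $p^{dr}$ the $a=0$ summand is handled by the induction hypothesis. The valuation bounds you give are right: each $e_a^{(u)}$ is a $p$-adic integer, $\ord_p\big(p^{b(r-1)}H_{p^{r-1}-1}(1^{b})\big)\ge b\ge 0$, so the $a\ge 2$ terms carry at least $p^{ar}$ with $ar\ge 2r\ge 4$; for $a=1$ the pairing $n\leftrightarrow p^r-n$ on the units is fixed-point-free and yields $\ord_p(e_1^{(u)})\ge r$, whence $\ord_p(p^re_1^{(u)})\ge 2r\ge 4$. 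This closes the induction for all $d\in\{1,2,3\}$ simultaneously.

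The paper follows the same overall scheme --- induction on $r$ with the unit/multiple splitting --- but the execution differs. For \eqref{mainth2lem1eq1} the paper expands $\frac{1}{lp+j}$ as a Taylor series in $lp$ and sums explicitly over $0\le l\le p^{k}-1$, $1\le j\le p-1$ to see that the unit contribution vanishes modulo $p^4$; your pairing argument achieves the same conclusion more directly. For \eqref{mainth2lem1eq2} and \eqref{mainth2lem1eq3} the paper reduces to the power sums $H_{p^r-1}(2)$ and $H_{p^r-1}(3)$ via the Newton relations and then argues ``similarly'', whereas you stay with the elementary symmetric functions throughout and never need those auxiliary congruences. Your observation that this avoids dividing by $6$ is a genuine advantage: the Newton route gives only $6p^{3r}H_{p^r-1}(1,1,1)\equiv 6p^{3}H_{p-1}(1,1,1)\pmod{p^4}$, and cancelling $6$ loses a power of $3$, so at $p=3$ one would have to squeeze out an extra factor from the power-sum congruences to recover \eqref{mainth2lem1eq3}. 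Your uniform treatment sidesteps this entirely.
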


\begin{proof}
We first prove \eqref{mainth2lem1eq1}. Clearly, it holds for $r=1$. Assume that it holds for $r=k>1$. Then for $r=k+1$ we have
\begin{align*}
p^{k+1}H_{p^{k+1}-1}(1)=&p^{k+1}\sum_{\substack{j=1\\ p\mid j}}^{p^{k+1}-1}\f1{j}+p^{k+1}\sum_{\substack{j=1\\ p\nmid j}}^{p^{k+1}-1}\f1{j}=p^k\sum_{j=1}^{p^k-1}\f1{j}+p^{k+1}\sum_{l=0}^{p^k-1}\sum_{j=1}^{p-1}\f{1}{lp+j}\\
\eq&p^k\sum_{j=1}^{p^k-1}\f1{j}+p^{k+1}\sum_{l=0}^{p^k-1}\sum_{j=1}^{p-1}\l(\f{lp}{j^2}-\f1{j}\r)\\
=& p^kH_{p^k-1}(1)+\f{p^{2k+2}(p^k-1)}{2}H_{p-1}(2)-p^{2k+1}H_{p-1}(1)\\
\eq&p^kH_{p^k-1}(1)\pmod{p^4}.
\end{align*}
By the induction hypothesis we obtain \eqref{mainth2lem1eq1}.

\eqref{mainth2lem1eq2} and \eqref{mainth2lem1eq3} can be proved similarly by noting that
$$
H_{p^r-1}(1,1)=\f{H_{p^r-1}(1)^2-H_{p^r-1}(2)}{2}
$$
and
$$
H_{p^r-1}(1,1,1)=\f{H_{p^r-1}(1)^3-3H_{p^r-1}(1)H_{p^r-1}(2)+2H_{p^r-1}(3)}{6}.
$$
\end{proof}

\begin{lemma}\label{mainth2lem2}
For any odd prime $p$ and positive integer $r$ we have
$$
p^{3r}\sum_{k=0}^{(p^r-3)/2}\f{16^k}{(2k+1)^3\binom{2k}{k}^2}\eq p^{3}\sum_{k=0}^{(p-3)/2}\f{16^k}{(2k+1)^3\binom{2k}{k}^2}\pmod{p^4}.
$$
\end{lemma}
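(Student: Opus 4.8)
The plan is to argue by induction on $r$, exactly in the spirit of the proofs of Lemmas \ref{mainth1lem3} and \ref{mainth2lem1}. The case $r=1$ is trivial, so I assume the congruence for $r-1$ (with $r\ge2$) and analyze $p^{3r}\sum_{k=0}^{(p^r-3)/2}\f{16^k}{(2k+1)^3\binom{2k}{k}^2}$. First I would split the sum according to whether $p\mid(2k+1)$. Since $16$ is a $p$-unit, the summand has $p$-adic valuation $-3\,\ord_p(2k+1)-2\,\ord_p\binom{2k}{k}$. For the terms with $p\nmid(2k+1)$ this equals $-2\,\ord_p\binom{2k}{k}$, and because $2k<p^r$, Kummer's theorem (counting base-$p$ carries in $k+k$) gives $\ord_p\binom{2k}{k}\le r-1$. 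Hence each such term, multiplied by $p^{3r}$, has valuation at least $3r-2(r-1)=r+2\ge4$, so the entire $p\nmid(2k+1)$ part vanishes modulo $p^4$.

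For the remaining terms I would substitute $2k+1=p(2k'+1)$, i.e. $k=pk'+\f{p-1}2=:K$, which is a bijection onto $k'\in\{0,\ldots,(p^{r-1}-3)/2\}$. As $(2k+1)^3=p^3(2k'+1)^3$, this part becomes $p^{3(r-1)}\sum_{k'=0}^{(p^{r-1}-3)/2}\f{16^{K}}{(2k'+1)^3\binom{2K}{K}^2}$. The goal is to replace $\f{16^{K}}{\binom{2K}{K}^2}$ by $\f{16^{k'}}{\binom{2k'}{k'}^2}$ term by term, for then the expression equals the left-hand side of the lemma for $r-1$, and the induction hypothesis closes the argument. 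The governing quantity is the multiplicative correction $\rho_{k'}:=16^{K-k'}\big(\binom{2k'}{k'}/\binom{2K}{K}\big)^2$, and I claim it suffices to prove $\rho_{k'}\equiv1\pmod p$. Indeed, writing the difference as $\f{16^{k'}}{\binom{2k'}{k'}^2}(\rho_{k'}-1)$ and setting $w=\ord_p(2k'+1)$, $c=\ord_p\binom{2k'}{k'}$, the $k'$-th term has valuation $3(r-1)-3w-2c+\ord_p(\rho_{k'}-1)$; using the Kummer bound $w+c\le r-2$ (the low digits forced by $p^{w}\mid(2k'+1)$ produce no carries) one gets $3w+2c\le 3(w+c)\le 3(r-2)$, so the valuation is at least $3+\ord_p(\rho_{k'}-1)$, and $\rho_{k'}\equiv1\pmod p$ yields the required $\ge4$.

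It remains to verify $\rho_{k'}\equiv1\pmod p$. From $2K=p(2k')+(p-1)$ and $K=pk'+\f{p-1}2$, the base-$p$ expansions of $2K$ and $K$ are those of $2k'$ and $k'$ with an extra lowest digit $p-1$ and $\f{p-1}2$ respectively; the units digits carry nothing, so $\ord_p\binom{2K}{K}=\ord_p\binom{2k'}{k'}=c$. Fermat's little theorem gives $16^{K-k'}=(4^{p-1})^{2k'+1}\equiv1\pmod p$, while the extra digit contributes the Lucas factor $\binom{p-1}{(p-1)/2}\equiv(-1)^{(p-1)/2}\pmod p$, so $\binom{2K}{K}\equiv\pm\binom{2k'}{k'}\pmod p$ and the sign disappears after squaring.

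The step I expect to be the main obstacle is precisely this last comparison when $c\ge1$, i.e. when $\binom{2k'}{k'}\equiv0\pmod p$: naive Lucas only records $0\equiv0$, and one must instead compare the $p$-free parts of the two central binomials. This is handled by an Anton/Granville-type refinement of Lucas' theorem applied to the prepended digit, the point being that the digit contribution $\binom{p-1}{(p-1)/2}$ attaches below all the carries of $k'+k'$ and therefore scales the unit part uniformly. Once the valuation bookkeeping has confirmed that precision modulo $p$ is all that is ever needed, this refinement suffices and the induction goes through.
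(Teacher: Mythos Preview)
Your proposal is correct and follows the same inductive scheme as the paper: split according to whether $p\mid(2k+1)$, dispose of the coprime part by a valuation count, substitute $2k+1=p(2k'+1)$, and show the resulting correction factor is $\equiv1\pmod p$ so as to descend to level $r-1$. The only cosmetic differences are in the auxiliary tools---you obtain the bound $\ord_p\!\big((2k'+1)\binom{2k'}{k'}\big)\le r-2$ directly from Kummer's theorem where the paper cites the Pan--Sun estimate \eqref{key}, and you compare the unit parts of $\binom{2K}{K}$ and $\binom{2k'}{k'}$ via an Anton--Granville refinement of Lucas where the paper writes the ratio through $p$-adic Gamma values and reduces to $\Gamma_p(0)^2/\Gamma_p(\tfrac12)^4\equiv1\pmod p$; the latter has the minor advantage of handling your $c\ge1$ case without singling it out.
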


\begin{proof}
Clearly, the congruence holds for $r=1$.

Now suppose that $r\geq2$. In view of the proof of \cite[Theorem 1.1]{PanSun2014}, for $0<k<p^r/2$ we have
\begin{equation}\label{key}
\f{p^r}{k\binom{2k}{k}}\eq0\pmod{p}.
\end{equation}
Thus we obtain
\begin{align*}
p^{3r}\sum_{k=0}^{(p^r-3)/2}\f{16^k}{(2k+1)^3\binom{2k}{k}^2}=&p^{3r}\sum_{\substack{k=0\\ p\mid 2k+1}}^{(p^r-3)/2}\f{16^k}{(2k+1)^3\binom{2k}{k}^2}+p^{3r}\sum_{\substack{k=0\\ p\nmid 2k+1}}^{(p^r-3)/2}\f{16^k}{(2k+1)^3\binom{2k}{k}^2}\\
\eq&p^{3r}\sum_{\substack{k=0\\ p\mid 2k+1}}^{(p^r-3)/2}\f{16^k}{(2k+1)^3\binom{2k}{k}^2}\pmod{p^4},
\end{align*}
since by \eqref{key}, $\ord_p(p^{3r}/\binom{2k}{k}^2)\geq r+2\geq 4$. It is routine to check that
\begin{align*}
p^{3r}\sum_{\substack{k=0\\ p\mid 2k+1}}^{(p^r-3)/2}\f{16^k}{(2k+1)^3\binom{2k}{k}^2}=p^{3r}\sum_{k=0}^{(p^{r-1}-3)/2}\f{4^{(2k+1)p-1}}{(2k+1)^3p^3\binom{(2k+1)p-1}{((2k+1)p-1)/2}^2}.
\end{align*}
Now
\begin{align*}
\binom{(2k+1)p-1}{((2k+1)p-1)/2}=\f{\Gamma((2k+1)p)}{\Gamma(\f{(2k+1)p+1}{2})^2}=-\f{\Gamma_p((2k+1)p)}{\Gamma_p(\f{(2k+1)p+1}{2})^2}\binom{2k}{k},
\end{align*}
where $\Gamma_p(x)$ is the $p$-adic Gamma function (see \cite[Chapter 7]{Robert00} for properties of this function). By \eqref{key}, for $0\leq k\leq (p^{r-1}-3)/2$ we have
$$
\f{p^{3r-3}}{(2k+1)^3\binom{2k}{k}^2}=\f{4p^{3r-3}}{(2k+1)(k+1)^2\binom{2k+2}{k+1}^2}\eq0\pmod{p^3}.
$$
By Fermat's little theorem we have
$$
4^{(2k+1)p-1}=16^{kp}\cdot 4^{p-1}\eq16^k\pmod{p}.
$$
Furthermore,
$$
\f{\Gamma_p((2k+1)p)^2}{\Gamma_p(\f{(2k+1)p+1}{2})^4}\eq\f{\Gamma_p(0)^2}{\Gamma_p(\f12)^4}=1\pmod{p}.
$$
Combining the above we arrive at
\begin{align*}
p^{3r}\sum_{k=0}^{(p^r-3)/2}\f{16^k}{(2k+1)^3\binom{2k}{k}^2}\eq p^{3r-3}\sum_{k=0}^{(p^{r-1}-3)/2}\f{16^k}{(2k+1)^3\binom{2k}{k}^2}\pmod{p^4}.
\end{align*}
Then the desired result follows from induction on $r$.
\end{proof}

\medskip

\noindent{\it Proof of Theorem \ref{sec3th}}. We need the following pair which appeared in \cite{WangSD2018}
\begin{gather*}
F(n,k)=(-1)^k(4n+1)\f{(\f12)_n^3(\f12)_{n+k}}{(1)_n^3(1)_{n-k}(\f12)_k^2},\\
G(n,k)=(-1)^{k-1}\f{4(\f12)_n^3(\f12)_{n+k-1}}{(1)_{n-1}^3(1)_{n-k}(\f12)_k^2}.
\end{gather*}
One may easily check that for any $n\in\N$ and $k\in\Z^{+}$
$$
(2k-1)F(n,k-1)-2kF(n,k)=G(n+1,k)-G(n,k).
$$
Note that such pair is not a WZ pair. Nevertheless, it is also very useful as classical WZ pairs.
For $m\in\N$, by induction on $m$ and noting that
$$
F(n,k)=0\quad\t{for}\ k>n
$$
and
$$
G(0,k)=0\quad\t{for}\ k>0,
$$
we have
\begin{equation}\label{sec3id1}
\sum_{n=0}^{m}F(n,0)=\f{4^m}{\binom{2m}{m}}F(m,m)+\sum_{k=1}^m\f{4^{k-1}G(m+1,k)}{(2k-1)\binom{2k-2}{k-1}}.
\end{equation}
Taking $m=(p^r-1)/2$ in \eqref{sec3id1} we have
\begin{align*}
\sum_{k=0}^{(p^r-1)/2}(4k+1)\f{(\f12)_k^4}{(1)_k^4}=&\sum_{k=0}^{(p^r-1)/2}F(n,0)\\
=&\f{2^{p^r-1}}{\binom{p^r-1}{(p^r-1)/2}}F\l(\f{p^r-1}{2}\r)+\sum_{k=1}^{(p^r-1)/2}\f{4^{k-1}}{(2k-1)\binom{2k-2}{k-1}}G\l(\f{p^r+1}{2},k\r).
\end{align*}
It suffices to show
\begin{equation}\label{sec3thkey1}
\f{1}{p^r}\cdot\f{2^{p^r-1}}{\binom{p^r-1}{(p^r-1)/2}}F\l(\f{p^r-1}{2}\r)\eq \f{1}{p}\cdot\f{2^{p-1}}{\binom{p-1}{(p-1)/2}}F\l(\f{p-1}{2}\r)\pmod{p^4}
\end{equation}
and
\begin{equation}\label{sec3thkey2}
\f{1}{p^r}\cdot\sum_{k=1}^{(p^r-1)/2}\f{4^{k-1}}{(2k-1)\binom{2k-2}{k-1}}G\l(\f{p^r+1}{2},k\r)\eq \f{1}{p}\cdot\sum_{k=1}^{(p-1)/2}\f{4^{k-1}}{(2k-1)\binom{2k-2}{k-1}}G\l(\f{p+1}{2},k\r)\pmod{p^4}.
\end{equation}

We first consider \eqref{sec3thkey1}. Note that
$$
\f{(1/2)_k}{(1)_k}=\f{\binom{2k}{k}}{4^k}.
$$
Thus we have
\begin{align*}
\f{1}{p^r}\cdot\f{2^{p^r-1}}{\binom{p^r-1}{(p^r-1)/2}}F\l(\f{p^r-1}{2}\r)=&\f{1}{p^r}\cdot\f{(-1)^{(p^r-1)/2}2^{p^r-1}(2p^r-1)}{\binom{p^r-1}{(p^r-1)/2}}\cdot\f{(\f12)_{(p^r-1)/2}^3(\f12)_{p^r-1}}{(1)_{(p^r-1)/2}^3(\f12)_{(p^r-1)/2}^2}\\
=&\binom{\f12p^r-1}{p^r-1}\binom{2p^r-1}{p^r-1}.
\end{align*}
For any $p$-adic integer $a$, it is easy to see that
\begin{align}\label{morley}
&\binom{ap^r-1}{p^r-1}=\f{(1+(a-1)p^r)_{p^r-1}}{(1)_{p^r-1}}\notag\\
\eq&1+(a-1)p^rH_{p^r-1}(1)+(a-1)^2p^{2r}H_{p^r-1}(1,1)+(a-1)^3p^{3r}H_{p^r-1}(1,1,1)\notag\\
\eq&1+(a-1)pH_{p-1}(1)+(a-1)^2p^2H_{p-1}(1,1)+(a-1)^3p^3H_{p-1}(1,1,1)\notag\\
\eq&\binom{ap-1}{p-1}\pmod{p^4},
\end{align}
where we have used Lemma \ref{mainth2lem1}. Therefore,
\begin{equation}\label{Fkey}
\f{1}{p^r}\cdot\f{2^{p^r-1}}{\binom{p^r-1}{(p^r-1)/2}}F\l(\f{p^r-1}{2}\r)\eq\binom{\f12p-1}{p-1}\binom{2p-1}{p-1}\pmod{p^4}.
\end{equation}
Then \eqref{sec3thkey1} follows by noting that the right-hand side of the above congruence is independent of $r$.

Below we consider \eqref{sec3thkey2}. It is easy to see that
\begin{align*}
&\f{1}{p^r}\cdot\sum_{k=1}^{(p^r-1)/2}\f{4^{k-1}}{(2k-1)\binom{2k-2}{k-1}}G\l(\f{p^r+1}{2},k\r)\\
=&\f{1}{p^r}\sum_{k=1}^{(p^r-1)/2}\f{4^{k-1}}{(2k-1)\binom{2k-2}{k-1}}\cdot\f{4(-1)^{k-1}(\f12)_{(p^r+1)/2}^3(\f12)_{(p^r-1)/2+k}}{(1)_{(p^r-1)/2}^3(1)_{(p^r+1)/2-k}(\f12)_k^2}.
\end{align*}
Note that
$$
\l(\f12\r)_{(p^r-1)/2+k}=\l(\f12\r)_{(p^r-1)/2}\l(\f{p^r}{2}\r)_k=\f{p^r}{2}\l(\f12\r)_{(p^r-1)/2}\l(1+\f{p^r}{2}\r)_{k-1}
$$
and
$$
(1)_{(p^r+1)/2-k}=(-1)^{k-1}\f{(1)_{(p^r-1)/2}}{(\f12-\f{p^r}2)_{k-1}}.
$$
Therefore,
\begin{align*}
&\f{1}{p^r}\cdot\sum_{k=1}^{(p^r-1)/2}\f{4^{k-1}}{(2k-1)\binom{2k-2}{k-1}}G\l(\f{p^r+1}{2},k\r)\\
=& \f{p^{3r}}{16^{p^r-1}}\binom{p^r-1}{(p^r-1)/2}^4\sum_{k=0}^{(p^r-3)/2}\f{64^k(1+\f{p^r}2)_k(\f12-\f{p^r}{2})_k}{(2k+1)^3\binom{2k}{k}^3(1)_k^2}.
\end{align*}
By \eqref{key} we have
$$
\f{p^{3r}}{(2k+1)^3\binom{2k}{k}^3}=\f{8p^{3r}}{(k+1)^3\binom{2k+2}{k+1}^3}\eq0\pmod{p^3}.
$$
Thus by Fermat's little theorem, \eqref{morley} and Lemma \ref{mainth2lem2} we further obtain
\begin{align}\label{Gkey}
\f{1}{p^r}\cdot\sum_{k=1}^{(p^r-1)/2}\f{4^{k-1}}{(2k-1)\binom{2k-2}{k-1}}G\l(\f{p^r+1}{2},k\r)\eq& p^{3r}\sum_{k=0}^{(p^r-3)/2}\f{16^k}{(2k+1)^3\binom{2k}{k}^2}\notag\\
\eq& p^3\sum_{k=0}^{(p-3)/2}\f{16^k}{(2k+1)^3\binom{2k}{k}^2}\pmod{p^4}.
\end{align}
This proves \eqref{sec3thkey2}.

The proof of Theorem \ref{sec3th} is now complete.
\qed

\medskip

\noindent{\it Proof of Theorem \ref{mainth2}}. By \eqref{Fkey} and \eqref{Gkey} we arrive at
\begin{equation}\label{mainth2key1}
\f{1}{p^r}\sum_{k=0}^{(p^r-1)/2}(4k+1)\f{(\f12)_k^4}{(1)_k^4}\eq\binom{\f12p-1}{p-1}\binom{2p-1}{p-1}+p^3\sum_{k=0}^{(p-3)/2}\f{16^k}{(2k+1)^3\binom{2k}{k}^2}\pmod{p^4}.
\end{equation}
By \eqref{morley},
$$
\binom{\f12p-1}{p-1}\eq 1-\f12pH_{p-1}(1)+\f14p^2H_{p-1}(1,1)-\f18p^3H_{p-1}(1,1,1)\pmod{p^4}
$$
and
$$
\binom{\f2p-1}{p-1}\eq 1+pH_{p-1}(1)+p^2H_{p-1}(1,1)+p^3H_{p-1}(1,1,1)\pmod{p^4}.
$$
It is known (cf. \cite{Hessami2014}) that
$$
H_{p-1}(1,1)\eq -\f13pB_{p-3}\pmod{p^2}
$$
and
$$
H_{p-1}(1,1,1)\eq0\pmod{p}.
$$
Then in view of \eqref{harmonic1} we get that
\begin{equation}\label{mainth2key2}
\binom{\f12p-1}{p-1}\binom{2p-1}{p-1}\eq 1-\f7{12}p^3B_{p-3}\pmod{p^4}.
\end{equation}
Sun \cite[Theorem 1.2]{Sun2014} proved that
\begin{equation}\label{mainth2key3}
\sum_{k=0}^{(p-3)/2}\f{16^k}{(2k+1)^3\binom{2k}{k}^2}\eq\f74B_{p-3}\pmod{p}.
\end{equation}
Substituting \eqref{mainth2key2} and \eqref{mainth2key3} into \eqref{mainth2key1} we immediately obtain Theorem \ref{mainth2}.\qed

\medskip

Now we can easily obtain Theorem \ref{mainth3}.
\medskip

\noindent{\it Proof of Theorem \ref{mainth3}}. The case $p>3$ is the immediate corollary of Theorems \ref{mainth1} and \ref{mainth2}.

Now we consider the case $p=3$. In view of Theorems \ref{sec2th} and \ref{sec3th}, we only need to prove \eqref{mainth3} for $r=1$. In fact, if $p=3$ and $r=1$, one may check \eqref{mainth3} directly.\qed

\begin{Acks}
The authors are grateful to Prof. Zhi-Wei Sun for his helpful suggestions on this paper. This work was supported by the National Natural Science Foundation of China (grant no. 11971222)
\end{Acks}

\end{document}